\definecolor{black}{rgb}{0.0, 0.0, 0.0}
\definecolor{red}{rgb}{1.0, 0.5, 0.5}
\newcommand{\margnote}[1]{
\ifthenelse{\boolean{shownotes}}%
{\marginpar{\raggedright\tiny\texttt{#1}}}%
{}%
}
\newcommand{\hole}[1]{
\ifthenelse{\boolean{shownotes}}%
{\begin{center} \fbox{ \rule {.25cm}{0cm} \rule[-.1cm]{0cm}{.4cm}
\parbox{.85\textwidth}{\begin{center} \texttt{#1}\end{center}} \rule
{.25cm}{0cm}}\end{center}} {} }
\title[Coupled Vlasov and non-Newtonian fluid dynamics]{Coupled Vlasov and non-Newtonian fluid dynamics: existence and large-time behavior}
\numberwithin{equation}{section}
\newtheorem{theorem}{Theorem}[section]
\newtheorem{lemma}{Lemma}[section]
\newtheorem{proposition}{Proposition}[section]
\newtheorem{remark}{Remark}[section]
\newtheorem{definition}{Definition}[section]
\newcommand{\R}{\mathbb R}
\newcommand{\N}{\mathbb N}
\newcommand{\ls}{\lesssim}
\newcommand{\T}{\mathbb T}
\newcommand{\mc}{\mathcal C}
\newcommand{\bq}{\begin{equation}}
\newcommand{\eq}{\end{equation}}
\newcommand{\e}{\varepsilon}
\newcommand{\te}{\tilde\varepsilon}
\newcommand{\lt}{\left}
\newcommand{\rt}{\right}
\newcommand{\pa}{\partial}
\newcommand{\calE}{\mathcal{E}}
\newcommand{\calD}{\mathcal{D}}
\newcommand{\mt}{\mathcal{T}}
\newcommand{\intt}{\int_{\T^3}}
\newcommand{\intr}{\int_{\R^3}}
\newcommand{\inttr}{\iint_{\T^3 \times \R^3}}
\begin{document}
\allowdisplaybreaks

\author[Choi]{Young-Pil Choi}
\address[Young-Pil Choi]{\newline Department of Mathematics \newline
	Yonsei University, 50 Yonsei-Ro, Seodaemun-Gu, Seoul 03722, Republic of Korea}
\email{ypchoi@yonsei.ac.kr}

\author[Jung]{Jinwook Jung}
\address[Jinwook Jung]{\newline 
	Department of Mathematics and Research Institute for Natural Sciences\newline
	Hanyang University, 222 Wangsimni-ro, Seongdong-gu, Seoul 04763, Republic of Korea}
\email{jinwookjung@hanyang.ac.kr}

\author[Wr\'oblewska-Kami\'nska]{Aneta Wr\'oblewska-Kami\'nska}
\address[Aneta Wr\'oblewska-Kami\'nska]{\newline Institute of Mathematics of the Polish Academy of Sciences \newline
	ul. \'Sniadeckich 8, 00-656 Warszawa, Poland}
\email{awrob@impan.pl}

\date{\today}

\subjclass[]{35Q70, 76A05}
\keywords{Kinetic-fluid model, non-Newtonian flow, global weak solutions, large-time behavior.}

\begin{abstract} We study a coupled kinetic-non-Newtonian fluid system on the periodic domain $\T^3$, where particles evolve by a Vlasov equation and interact with an incompressible power-law fluid through a drag force. We prove the global existence of weak solutions for all $p > \frac85$, where $p > 1$ denotes the power-law exponent of the fluid's stress-strain relation. Under an additional uniform boundedness assumption on the particle density, we also establish large-time decay of a modulated energy functional measuring deviation from velocity alignment. The decay rate is algebraic when $p > 2$ and exponential when $\frac65 \le  p \le 2$, reflecting the role of fluid dissipation in the large-time dynamics.
\end{abstract}

\maketitle \centerline{\date}

\tableofcontents

%
%
%
%

\section{Introduction}
The interplay between dilute particle systems and incompressible non-Newtonian fluids arises in various applications, such as polymeric solutions and suspension flows. These systems involve multiscale interactions between a kinetic population of particles and a macroscopic fluid medium. A mathematical model for such systems couples a Vlasov-type kinetic equation for the particle distribution with a non-Newtonian fluid described by a modified incompressible Navier--Stokes system.

In this paper, we consider the following coupled kinetic-fluid system:
\begin{align}\label{main_eq}
\begin{aligned}
&\pa_t f + v \cdot \nabla_x f + \nabla_v \cdot ((u-v)f) = 0,  \quad (x,v) \in \T^3 \times \R^3, \quad t > 0,\cr
&\pa_t u + (u \cdot \nabla_x)u + \nabla_x \pi - \mbox{div}_x (\tau(Du)) = -\int_{\R^3} (u-v)f\,dv, \cr
&\mbox{div}_x u = 0,
\end{aligned}
\end{align}
subject to initial data
\begin{align}\label{init_data}
f(0,x,v) = f_0(x,v), \quad u(0,x) = u_0(x), \quad x \in \T^3, \ v \in \R^3.
\end{align}
Here $f(t,x,v)$ is the particle distribution function, $u(t,x)$ is the fluid velocity, and $\pi(t,x)$ is the pressure.  The stress tensor $\tau$ depends on the symmetric part of the gradient, $Du := (\nabla_x u + \nabla_x^T u)/2$, and is assumed to satisfy the power-law relation
\[
\tau(Du) = \mu |Du|^{p-2}Du, \qquad \mu > 0,\quad p > 1,
\]
which describes generalized Newtonian fluids such as shear-thinning ($p<2$) and shear-thickening ($p>2$) models.

The mathematical study of kinetic-fluid systems has drawn considerable attention due to their relevance in modeling fluid-particle interactions in complex media. A prototypical example is the Navier--Stokes--Vlasov--Fokker--Planck system, which describes dilute particles dispersed in a viscous fluid, with applications ranging from spray dynamics and combustion to atmospheric processes such as rain formation and pollution transport \cite{BWC00, CGL08, Mat06, Oro81, RM52, RM52_2}.
 The behavior of a microscopic-macroscopic system for a general class of dilute solutions of polymeric liquids with noninteracting polymer chains can be described by Navier--Stokes--Fokker--Planck type equations with an elastic extra-stress tensor on the right-hand side in the momentum equation. The extra-stress tensor stems from the random movement of the polymer chains and is defined through the associated probability density function which satisfies a Fokker--Planck-type parabolic equation. The existence of global-in-time weak solutions for such problems can be found in \cite{BS2011,BS2012}.

In the Newtonian regime, where the fluid stress tensor is linear in the velocity gradient, the global-in-time existence of weak solutions to the incompressible Vlasov--Navier--Stokes or Navier--Stokes--Vlasov--Fokker--Planck systems has been established using compactness methods and velocity averaging techniques \cite{BDGM09, BGM17, CCK16, CKL11,  CJ23,LLY22, Yu13}; see also \cite{Ham98} for the Vlasov--Stokes case. For the incompressible Vlasov--Fokker--Planck--Euler system, classical solutions near equilibrium have been constructed with algebraic decay rates \cite{CDM11} in the whole space, while the Navier--Stokes--Vlasov--Fokker--Planck system admits global classical solutions under smallness assumptions, via hypocoercivity estimates \cite{GMZ10}.

The non-Newtonian case introduces additional analytical challenges due to the nonlinear dependence of the stress tensor on the strain rate. For power-law fluids with exponents $p \geq \frac{11}{5}$, global weak and strong solutions have been obtained via Galerkin approximation, monotonicity and compactness arguments.  For the theory of weak and strong solutions for power-law fluids without coupling with kinetic equations, see \cite{MNRR96}. Weak existence theories allowing behaviors more general than power-laws, such as anisotropic fluids with flows obeying nonstandard rheology, are also available in the framework of Musielak-Orlicz spaces, see \cite{CGSW2021, W2013}. 
For the case of non-Newtonian fluid coupled with particle systems, see \cite{MPP18, ZFG24}. These approaches typically involve approximations of the nonlinear stress and a priori estimates. Local well-posedness of strong solutions for the Vlasov--non-Newtonian system has also been studied under structural conditions on the stress tensor, using Wasserstein stability and fixed-point methods \cite{KKK23}. In parallel, global weak solutions were constructed using truncated approximations and energy-based arguments \cite{ZFG24}.

The large-time behavior of coupled kinetic-fluid systems has been studied in various settings. In the Newtonian case, algebraic and exponential decay estimates have been obtained for the Vlasov--Navier--Stokes system; see \cite{Choi16, Han22, HMM20}. For non-Newtonian couplings, an exponential rate of convergence has been established in alignment-type models using Lyapunov functional methods \cite{HKKP18, MPP18}. However, these results fundamentally depend on the presence of a linear viscous term in the fluid stress tensor. While a $p$-power nonlinearity is present in those models, it does not contribute critically to the decay mechanism. In contrast, our system features a purely nonlinear $p$-power law viscosity, without any linear dissipation. In this regime, exponential decay is not generally expected; instead, the large-time behavior is governed by the interplay between nonlinear fluid diffusion and kinetic drag, leading naturally to algebraic decay.
 
Despite these advances, most existing works either focus on strong solutions under restrictive assumptions or establish weak solutions without quantitative information on their large-time behavior. Moreover, previous results typically include a linear (Newtonian) component in the stress tensor to guarantee coercivity in $L^2$ and facilitate compactness arguments, while our model features a purely nonlinear $p$-power law viscosity without any linear part. Although such a nonlinearity still provides coercivity in the $L^p$ framework, its fully nonlinear nature poses new analytical challenges. To the best of our knowledge, this is the first global-in-time existence theory for weak solutions in this setting. Furthermore, existing large-time behavior analyses fundamentally rely on the linear viscous term to derive exponential decay rates, leaving the quantitative study of large-time dynamics in the purely nonlinear regime largely unexplored. Many of these results are also confined to bounded spatial domains with prescribed boundary conditions or the whole space, while the periodic case remains less explored.

The aim of this paper is twofold. First, we establish the global-in-time existence of weak solutions to the system \eqref{main_eq}--\eqref{init_data} in the periodic domain $\T^3$, under suitable assumptions on the initial data and for the extended exponent range $p > \frac85$, which notably includes the lower part of the shear-thinning regime. Second, we investigate the large-time dynamics of such systems and obtain quantitative decay estimates for the modulated energy in the case $p < 2$, which reflects the combined dissipative effects of the non-Newtonian viscosity and the coupling through the drag force.

Before we proceed further, we emphasize the analytic difficulties in establishing the global existence of weak solutions. One particular difficulty stems from the lack of direct control over the spatial average of the fluid velocity. In the case of pure non-Newtonian fluids, Poincar\'e-type inequalities can be employed either via homogeneous Dirichlet boundary conditions in bounded domains or by imposing a zero-average constraint in the periodic setting. However, when coupled to a kinetic equation, the total momentum of the system is conserved, and the spatial averages of the fluid and particle velocities may evolve in time. As a result, standard coercivity estimates involving $\|\nabla u\|_{L^p}$ cannot be directly applied. Addressing this issue requires a refined analysis of the coupled dynamics and careful treatment of the interaction between fluid dissipation and particle momentum exchange.

Another challenge is to control the drag force under the limited compactness available for the fluid velocity. Due to the purely $p$-power viscosity, the energy estimates provide only $W^{1,p}$-regularity to the fluid velocity. Thus, when $p<2$, the strong compactness of the velocity field is weaker than in the Newtonian case and is available only in the space $L^q(\T^3)$ with $q < \frac{3p}{3-p}$. Although this limited compactness is already present in pure non-Newtonian fluids, the coupled system requires an additional compactness analysis to pass to the limit in the drag force term. More precisely, one has to verify that the weakened compactness of the fluid velocity, combined with the compactness of the macroscopic quantities associated with the kinetic distribution function, is sufficient to identify the limit of the drag force in the sense of distributions. Furthermore, one needs to identify the weak limit of the nonlinear stress tensor. In the absence of the kinetic-fluid coupling, this can be achieved by the $L^\infty$-truncation method developed in \cite{FMS00}. In the present coupled system, however, the truncation argument has to be combined with the compactness analysis of the drag force. One of the key points of our proof is to show that this framework remains stable in the presence of the kinetic-fluid coupling, thereby allowing us to identify the nonlinear stress tensor and close the global existence argument.

%
%
%
%
%

In what follows, we make the problem formulation and the main results precise. We begin by defining weak solutions to the coupled system \eqref{main_eq}--\eqref{init_data}. We then state the global existence result, followed by a large-time decay estimate under an a priori setting.

%
%
%
%
%
\subsection{Definition of weak solutions and main results}
Before presenting the precise statements of our results, we first introduce the relevant function spaces and the notion of weak solutions. 

We define
\[
\mathcal{H} := \left\{ \psi \in L^2(\T^3)\ :\ \nabla_x \cdot \psi = 0 \right\} \quad \mbox{and} \quad
V^{s,p} := \left\{ \psi \in \dot{W}^{s,p}(\T^3)\ : \ \nabla_x \cdot \psi = 0,\ \int_{\T^3} \psi\,dx = 0 \right\}.
\]

\begin{definition}[Weak solution]\label{def_weak}
Let $T > 0$. A pair $(f,u)$ is said to be a weak solution to \eqref{main_eq}--\eqref{init_data} on $[0,T)$ if:
\begin{enumerate}
\item $f \in L^\infty(0,T; L^1_+ \cap L^\infty(\T^3 \times \R^3))$,
\item $u \in L^\infty(0,T; \mathcal{H}) \cap L^p(0,T; \dot{W}^{1,p}(\T^3))$,
\item the kinetic equation holds in the weak sense: for all $\phi \in \mathcal{C}_c^\infty([0,T)\times \T^3 \times \R^3)$,
\[
\int_0^T \!\!\inttr f(\pa_t \phi + v \cdot \nabla_x \phi + (u-v)\cdot \nabla_v  \phi)\,dvdxdt = \inttr f_0 \phi(0,\cdot,\cdot)\,dxdv,
\]
\item and the fluid equation holds in the weak sense: for all divergence-free $\psi \in C_c^\infty([0,T)\times\T^3)$,
\begin{align*}
&\int_0^T \intt \left(-u \cdot \pa_t \psi +  (u\cdot \nabla_x )u \cdot \psi  + \left(\intr (u-v)\cdot \psi f\,dv\right)\right)dxds \\
&\qquad=  \intt u_0 \cdot \psi(0,\cdot)\,dx - \int_0^T \intt \tau(Du) : D\psi \,dxds.
\end{align*}

\end{enumerate}
\end{definition}

Our first main result establishes the global existence of such weak solutions.

\begin{theorem}[Global existence]\label{thm_main}
Let $T > 0$ and $p > \frac85$. Assume the initial data satisfy
\[
f_0 \in L^\infty(\T^3 \times \R^3), \quad (1+|v|^2)f_0 \in L^1(\T^3 \times \R^3), \quad \mbox{and} \quad u_0 \in \mathcal{H}.
\]
Then there exists a weak solution $(f,u)$ to the system \eqref{main_eq} on $[0,T)$ in the sense of Definition \ref{def_weak}, which satisfies the energy inequality:
\bq\label{en_ineq}
\begin{aligned}
&\sup_{t \in (0,T)}\left( \inttr \frac{|v|^2}{2} f\,dvdx + \intt \frac{|u|^2}{2}\,dx \right) + \int_0^T \intt \mu|D u|^p\,dxdt + \int_0^T \inttr |u-v|^2 f\,dvdxdt \\
&\quad \le \inttr \frac{|v|^2}{2} f_0\,dvdx + \intt \frac{|u_0|^2}{2}\,dx.
\end{aligned}
\eq
\end{theorem}
Our proof is based on a two-level approximation scheme combined with compactness and stability arguments. The first step involves mollifying the nonlinear terms and truncating the initial data in velocity space to construct a regularized system depending on a parameter $\e > 0$. For fixed $\e$, we apply a Galerkin approximation and establish uniform-in-$n$ estimates that allow passage to the limit as $n \to \infty$. The key observation here is that, although the fluid variable $u$ does not satisfy a conservation of momentum identity on the periodic domain, we may decompose $u$ as $u = w + u_c$, where $u_c$ captures the spatial average of $u$ and $w$ has zero average. This decomposition enables a priori estimates and compactness arguments to be carried out effectively in a periodic setting, with $w$ satisfying better integrability and embedding properties.

The second step consists in removing the regularization by sending $\e \to 0$. This requires careful analysis of the nonlinear drag term and the stress tensor. Strong compactness of macroscopic quantities is obtained using velocity averaging and moment estimates. The nonlinear stress term is treated via monotonicity and weak lower semicontinuity arguments. A notable feature of our approach is that the global-in-time existence result holds under the condition $p > \frac85$, which improves upon previous results requiring stronger coercivity assumptions on the stress tensor. Finally, the energy inequality is recovered in the limit using convexity and semicontinuity. However, the solutions constructed up to this point are shown to exist on a sufficiently small time interval, to facilitate the fixed point argument. Nevertheless, we can iterate the above process thanks to the energy inequality, thereby completing the proof of Theorem \ref{thm_main}.

\begin{remark}
The existence of weak solutions is not expected when $p$ is much less than $\frac 85$ due to difficulties in handling the nonlinear stress tensor. In the absence of kinetic coupling, one can construct measure-valued solutions for non-Newtonian fluids under the weaker assumption $p> \frac 65$. This has been shown, for instance, in \cite[Chapter 5, Theorem 2.17]{MNRR96}. However, for the coupled kinetic-fluid system \eqref{main_eq}, the drag term in the fluid equation imposes an additional restriction. As shown in \eqref{p_cond_coup} below, controlling the drag force requires $p > \frac{29 + \sqrt{91}}{25} \approx 1.541$. This indicates that constructing measure-valued solutions in the coupled setting would demand stronger assumptions than $p> \frac 65$.
\end{remark}

\begin{remark}
The existence of weak solutions can be easily extended to a more general form of the viscous stress tensor $\tau$. Specifically, it suffices to assume that $\tau$ continuously depends on its arguments and satisfies the following conditions:
\begin{itemize}
\item $($$p$-coercivity$)$: there exists $c_1 >0$, $\psi_1 \in L^1((0,T) \times \T^3)$ such that
$$\tau(t,x, K) : K  \geq c_1 |K|^p - \psi_1(t,x), \quad \forall \,  (t,x) \in (0,T) \times \T^3, \ K \in \R^{3\times 3}_{sym},$$
\item $($$(p-1)$-growth$)$:  there exists $c_2 >0$, $\psi_2 \in L^{\frac{p}{p-1}}((0,T) \times \T^3)$ such that
	$$| \tau(t,x,K) | \leq  c_2 |K|^{p-1} + \psi_2, \quad\forall \,  (t,x) \in (0,T) \times \T^3, \ K \in \R^{3\times 3}_{sym},$$
\item $($strict monotonicity$)$:
\bq\label{st.mt}
(\tau(t,x,K_1) - \tau(t,x,K_2))( K_1 - K_2 ) > 0, \quad \forall \,  (t,x) \in (0,T) \times \T^3, \ K_1, \, K_2 \in \R^{3\times 3}_{sym}, \,K_1 \neq K_2.
\eq
\end{itemize}
Further details and proofs of these general conditions can be found in \cite{FMS97,FMS00,MNRR96}.
\end{remark}

Now, we further investigate the large-time behavior of solutions under the additional assumption that the macroscopic particle density remains uniformly bounded. To measure convergence toward collective alignment, we introduce a {\it modulated energy functional} $\calE(t)$ defined by
\[
\calE(t) := \frac12\inttr |v - v_c|^2 f\,dxdv + \frac12\intt |u - u_c|^2 + \frac14|u_c - v_c|^2,
\]
where 
\[
v_c := \inttr vf\,dxdv \quad \mbox{and} \quad u_c:= \intt u\,dx
\]
denote the spatial averages of the particle and fluid velocities, respectively. This quantity effectively captures the deviation of the system from a monokinetic state in which both components are aligned in velocity.

Our second main result shows that this modulated energy decays to zero as time progresses, with an explicit rate depending on the power-law exponent $p$ characterizing the non-Newtonian fluid response. In particular, the case $p > 2$ leads to an algebraic decay, while $p \le 2$ yields an exponential decay.
\begin{theorem}[Large-time behavior]\label{thm_main2}
Let $\frac65 \leq p < \infty$ and let $(f,u)$ be a sufficiently regular solution \footnote{For example, we can assume that $(f,u)$ is a classical solution to \eqref{main_eq} with the energy balance, i.e. \eqref{en_ineq} with the inequality replaced by the equality.} to \eqref{main_eq}, and assume that the macroscopic particle density is uniformly bounded in time:
\bq\label{condi_large}
\int_{\R^3} f\,dv \in L^\infty(0,\infty; L^r(\T^3)) \quad  \mbox{for some }  r \in \left\{\begin{array}{lcl} 
 [\frac{6}{5p-6}, \infty] &\mbox{if}& \frac65 \leq p <2, \\[2mm]
 [\frac{3p}{5p-6}, \infty] &\mbox{if}& 2\le p<3, \\[2mm]
 (1, \infty] &\mbox{if} & p=3, \\[2mm]
 [1,\infty] & \mbox{if} & 3<p.   \end{array} \right.
\eq
Then the modulated energy $\mathcal{E}(t)$ satisfies the decay estimate
\[
\calE(t) 
\leq \begin{cases}
\left(\calE(0)^{\frac{2-p}{2}} + C_0 \left(\frac p2 - 1 \right) t \right)^{-\frac2{p-2}} & \text{if } p>2,\\[2mm]
\calE(0)e^{-C_0 t} & \text{if } \frac65 \le p \le 2,
\end{cases}
\]
for all $t \geq 0$, where $C_0 > 0$ is a constant independent of $t$.
\end{theorem}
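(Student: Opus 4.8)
The plan is to derive an exact dissipation identity for $\calE$, then a coercivity estimate bounding $\calE$ by its own dissipation, and finally to integrate the resulting differential inequality. \emph{Step 1 (dissipation identity).} Testing the kinetic equation with $1$ and with $v$, and the fluid equation with $1$ (the convective, pressure, and $\mathrm{div}_x\tau(Du)$ terms have vanishing mean on $\T^3$), one sees that for a sufficiently regular solution the total mass $\inttr f\,dxdv=1$ and the total momentum $\inttr vf\,dxdv+\intt u\,dx$ are conserved, so that $\dot v_c=-\dot u_c=\inttr(u-v)f\,dxdv$. Testing the kinetic equation with $|v|^2$ and the fluid equation with $u$ gives the energy identity (the equality case of the estimate in Theorem~\ref{thm_main})
\[
\frac{d}{dt}\Big(\frac12\inttr|v|^2f\,dxdv+\frac12\intt|u|^2\,dx\Big)=-\inttr|u-v|^2f\,dxdv-\mu\intt|Du|^p\,dx .
\]
Since $\frac12\inttr|v|^2f\,dxdv=\frac12\inttr|v-v_c|^2f\,dxdv+\frac12|v_c|^2$ and $\frac12\intt|u|^2\,dx=\frac12\intt|u-u_c|^2\,dx+\frac12|u_c|^2$ (using $\inttr(v-v_c)f\,dxdv=0$ and $\intt(u-u_c)\,dx=0$), and since the conservation laws yield $\frac{d}{dt}\big(\frac12|v_c|^2+\frac12|u_c|^2-\frac14|u_c-v_c|^2\big)=0$, subtracting the latter from the former gives
\[
\frac{d}{dt}\calE(t)=-\inttr|u-v|^2f\,dxdv-\mu\intt|Du|^p\,dx=:-\mathcal{D}(t),
\]
so in particular $\calE$ is nonincreasing and $\calE(t)\le\calE(0)$.

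\emph{Step 2 (coercivity).} Writing $B^2:=\inttr|u-v|^2f\,dxdv$, $A:=\|Du\|_{L^p(\T^3)}$ and $\rho:=\intr f\,dv$, the goal is $\calE\le C_\ast(B^2+A^2)$ with $C_\ast$ independent of $t$ (depending on the initial energy and $\sup_t\|\rho\|_{L^r}$). The basic tool is $\|u-u_c\|_{L^q(\T^3)}\le C\|Du\|_{L^p(\T^3)}$, which follows from Korn's inequality ($\|\nabla u\|_{L^p}\lesssim\|Du\|_{L^p}$ on $\T^3$, $1<p<\infty$), Poincaré, and Sobolev, and holds for $q\le p^\ast:=\frac{3p}{3-p}$ if $p<3$ (any $q<\infty$ if $p=3$, any $q\le\infty$ if $p>3$); note $q=2$ is admissible exactly when $p\ge\frac65$. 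With this: (a) the fluid piece obeys $\intt|u-u_c|^2\,dx\le CA^2$; (b) expanding $\inttr(v-u)f\,dxdv$ yields the identity $u_c-v_c=-\inttr(v-u)f\,dxdv-\intt(u-u_c)\rho\,dx$, hence $|u_c-v_c|^2\lesssim B^2+\|u-u_c\|_{L^{r'}}^2\|\rho\|_{L^r}^2\lesssim B^2+A^2$; (c) since $v_c$ minimizes $a\mapsto\inttr|v-a|^2f\,dxdv$, one has $\inttr|v-v_c|^2f\,dxdv\le\inttr|v-u_c|^2f\,dxdv\le 2B^2+2\intt|u-u_c|^2\rho\,dx\lesssim B^2+\|u-u_c\|_{L^{2r'}}^2\|\rho\|_{L^r}\lesssim B^2+A^2$. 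The most demanding Sobolev exponent appears in (c), requiring $2r'\le p^\ast$, i.e.\ $r\ge\frac{3p}{5p-6}$, which matches \eqref{condi_large} for $2\le p<3$ and is implied by it for $p\ge3$. For $p<2$ one refines (a)--(c) by interpolating the relevant $L^q$ norm of $u-u_c$ between $L^2$ (uniformly bounded via the energy estimate) and $L^{p^\ast}$, so as to produce the dissipative power $\|Du\|_{L^p}^p$ instead of $\|Du\|_{L^p}^2$; this is precisely what forces the sharper threshold $r\ge\frac{6}{5p-6}$ and upgrades the estimate to $\calE\le C_\ast(B^2+\|Du\|_{L^p}^p)\le C_\ast\max(1,\mu^{-1})\,\mathcal{D}$.

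\emph{Step 3 (integration).} For $\frac65\le p\le2$, Step 2 gives $\mathcal{D}\ge C_0\calE$, hence $\frac{d}{dt}\calE\le-C_0\calE$ and $\calE(t)\le\calE(0)e^{-C_0t}$ by Grönwall. For $p>2$, raising $\calE\le C_\ast(B^2+A^2)$ to the power $\frac p2\ge1$ gives $\calE^{p/2}\lesssim B^p+A^p$; since $A^p\le\mathcal{D}/\mu$, it remains only to absorb $B^p$: if $B\le1$ then $B^p\le B^2\le\mathcal{D}$, while if $B>1$ then $\mathcal{D}\ge B^2>1\ge\calE(0)^{-p/2}\calE^{p/2}$ by the monotonicity from Step 1. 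In all cases $\mathcal{D}\ge C_0\calE^{p/2}$, so $\frac{d}{dt}\calE\le-C_0\calE^{p/2}$; differentiating $\calE^{(2-p)/2}$ and integrating gives $\calE(t)\le\big(\calE(0)^{(2-p)/2}+C_0(\frac p2-1)t\big)^{-2/(p-2)}$.

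The genuinely delicate point is Step 2: the dissipation controls only $\|Du\|_{L^p}$ rather than an $L^2$-based norm, and—worse—the kinetic spreading $\inttr|v-v_c|^2f\,dxdv$ carries no dissipation of its own, so every term of $\calE$ must be routed through the drag dissipation $\inttr|u-v|^2f\,dxdv$ and through $\intt|u-u_c|^2\rho\,dx$, the last of which can only be absorbed by exploiting the uniform $L^r$ bound on $\rho$ with an exponent matched to the embedding $\dot W^{1,p}\hookrightarrow L^{2r'}$. The secondary difficulty, specific to $p<2$, is that the natural bound generates $\|Du\|_{L^p}^2$ rather than $\|Du\|_{L^p}^p$, and these are not comparable when $\|Du\|_{L^p}$ is large; trading one for the other by interpolation against the bounded $L^2$ energy is what dictates the precise integrability threshold on $\rho$ in that range.
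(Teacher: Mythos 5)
Your proposal is correct and follows the same three-step architecture as the paper: the exact identity $\frac{d}{dt}\calE + \calD = 0$, a coercivity bound $\calE \lesssim B^2 + \|Du\|_{L^p}^2$ (upgraded to $\lesssim B^2 + \|Du\|_{L^p}^p$ when $p<2$), and integration with the case split at $p=2$. Two harmless cosmetic differences from the paper are worth noting. First, you derive the dissipation identity by decomposing the global energy identity and using conservation of mass and total momentum to kill $\frac{d}{dt}\bigl(\tfrac12|v_c|^2 + \tfrac12|u_c|^2 - \tfrac14|u_c-v_c|^2\bigr)$, whereas the paper differentiates each of the three summands of $\calE$ directly; the outcome is the same. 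Second, in the coercivity step the paper expands $\inttr|u-v|^2 f\,dxdv$ in the variables $(u-u_c)+(u_c-v_c)+(v_c-v)$ and isolates $|u_c-v_c|^2 + \inttr|v-v_c|^2 f$, arriving at the inequality $\calE \le \inttr|u-v|^2 f + 3\inttr|u-u_c|^2 f + \tfrac12\intt|u-u_c|^2$; you instead bound each of the three pieces of $\calE$ separately, using the algebraic identity for $u_c-v_c$ and the fact that $v_c$ minimizes $a\mapsto\inttr|v-a|^2 f$, which is a slightly cleaner route to the same reduction: everything is controlled by $B^2$, $\intt|u-u_c|^2\rho\,dx$, and $\intt|u-u_c|^2\,dx$. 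The key analytical inputs — Korn/Poincar\'e/Sobolev embedding of $V^{1,p}$ into $L^{2r'}$, the Gagliardo--Nirenberg interpolation against the a priori bounded $\|u-u_c\|_{L^2}$ for $p<2$ (the paper works at $q=2-\delta$ while you work directly at $q=2$, both valid), and absorbing $B^p$ via $\calE(t)\le\calE(0)$ for $p>2$ — coincide, and your exponent arithmetic reproduces the thresholds in \eqref{condi_large} exactly.
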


\begin{remark}
The decay estimate in Theorem \ref{thm_main2} reflects the combined dissipative effects of the drag force and the non-Newtonian stress. When  $p > 2$, the energy decays algebraically, with slower rates for larger  $p$, corresponding to weaker diffusion in the fluid. In contrast, for  $\frac{6}{5} < p \le 2 $, the fluid dissipation is strong enough to yield exponential decay. We also notice that, since the spatial domain is bounded, one can simply assume 
\[
\int_{\R^3} f\,dv \in L^\infty(0,\infty; L^\infty(\T^3)).
\]
\end{remark}

\begin{remark}
The finite-mass condition on the particle density is a natural consequence of
mass conservation. Indeed, since $f$ is nonnegative, we have 
\[
\lt\|\intr f(t,\cdot,v)\,dv\rt\|_{L^1(\T^3)} = \inttr f(t,x,v)\,dxdv = \inttr f_0(x,v)\,dxdv.
\]
Thus, in the case $p>3$, condition \eqref{condi_large} is automatically satisfied by the conservation of mass.
\end{remark}

\begin{remark} 
As a consequence of Theorem \ref{thm_main2}, it follows from \cite[Remarks 1.5 \& 1.6]{Choi16} that the average velocities converge to a common limit:
\[
v_c(t), \ u_c(t) \to v^\infty \quad \mbox{as } t \to \infty,
\]
where
\[
v^\infty:= \frac12 \lt(\inttr vf_0\,dxdv + \intt u_0\,dx\rt).
\]
Moreover, the particle distribution converges in the bounded-Lipschitz distance to a monokinetic profile:
\[
{\rm d}_{BL} (f(t,x,v), \rho_f(t,x)\otimes \delta_{v^\infty}(v)) \to 0 \quad \mbox{as } t \to \infty,
\]
where ${\rm d}_{BL}$ denotes the bounded-Lipschitz distance for probability measures
\[
{\rm d}_{BL}(\mu, \nu) := \sup\lt\{ \lt|\inttr \phi \,d\mu -\inttr \phi\,d\nu\rt|  :  \|\phi\|_{L^\infty} \le 1, \    \sup_{(x,v) \neq (y,w)} \frac{|\phi(x,v) - \phi(y,w)|}{|(x,v) - (y,w)|} \le 1 \rt\},
\]
and  $\delta_{v^\infty}(v)$ denotes the Dirac mass at $v = v^\infty$. 
\end{remark}

To prove Theorem \ref{thm_main2}, we derive a differential identity for the modulated energy $\mathcal{E}(t)$, which quantifies the deviation of the coupled kinetic-fluid system from velocity alignment. The proof proceeds in two steps. First, we differentiate $\mathcal{E}(t)$ and show that it satisfies the identity 
\[
\frac{d}{dt} \mathcal{E}(t) + \mathcal{D}(t) = 0, 
\]
where $\mathcal{D}(t)$ denotes the total dissipation arising from fluid viscosity and the coupling term. Second, we estimate $\mathcal{E}(t)$ in terms of $\mathcal{D}(t)$ using Gagliardo--Nirenberg--Sobolev and Poincar\'e inequalities. This yields a differential inequality of the form
\[
\frac{d}{dt} \calE(t) + C_0 \calE(t)^{\gamma} \leq 0,
\]
with $\gamma = \frac{p}{2} > 1$ for $p > 2$, and $\gamma = 1$ when $p \le 2$, leading to the desired algebraic or exponential decay rates.

%
%
%
%
%
\subsection{Outline of the paper}
The rest of this paper is organized as follows. In Section \ref{sec_gw}, we prove the global existence of weak solutions by constructing approximate solutions and passing to the limit in two steps: first as the Galerkin dimension tends to infinity, and then as the mollification parameter vanishes, which results in a local-in-time weak solutions. Then we use the energy inequality to extend this local solution to any finite time interval. Section \ref{sec_lt} is devoted to the large-time behavior of regular solutions. Under a boundedness assumption on the particle density, we establish decay estimates for a modulated energy functional, with the rate depending on the power-law exponent of the fluid.

%
%
%
%
%
\section{Global-in-time existence of weak solutions}\label{sec_gw}
%
%
%
%
%
\subsection{Approximate system: mollification and Galerkin projection}
To construct global weak solutions to the system \eqref{main_eq}, we first introduce a regularized approximation that incorporates mollification and a Galerkin projection onto a finite-dimensional divergence-free subspace. This regularization plays a key role in controlling nonlinear and nonlocal terms, particularly the drag force and the convective term. In addition, regularizing the convective term facilitates passing to the limit in the nonlinear viscous stress.

We begin by regularizing the drag interaction and convective term in the momentum equation. For a fixed $\e > 0$, let $\theta$ be a standard mollifier:
\[
0 \leq \theta \in \mc^\infty_0(\T^3), \quad \mbox{supp}_x\theta \subseteq B(0,1), \quad \int_{\T^3} \theta(x)\,dx = 1,
\]
and we set a sequence of smooth mollifiers $\theta_\e(x) := \frac{1}{\e^3}\theta(\frac x\e)$.

For the fluid component, we employ a Galerkin approximation. Let $A := -\mathbb{P} \Delta$ denote the Stokes operator on the periodic domain $\T^3$, where $\mathbb{P}$ is the Leray projector onto the space $\mathcal{H}$ of divergence-free vector fields in $L^2(\T^3)$. The domain of $A$ is given by 
\[
D(A) := \lt\{ F \in [H^2(\T^3)]^3 \ \bigg| \ \nabla_x \cdot F = 0, \quad \intt F\,dx = 0\rt\}. 
\]
 Then we have a family of functions $ \mathcal{N}:= \{c_i\}_{i \ge 1}$ satisfying
\begin{enumerate}
\item
$\mathcal{N}$ is an orthonormal basis in $\mathcal{H}$.

\item
For every $i \in \mathbb{N}$, $c_i\in D(A)\cap C^\infty(\T^3)$ is an eigenfunction of the Stokes operator:
\[
Ac_i = \lambda_i c_i,
\]
with $0<\lambda_1 \le \lambda_2 \le \cdots \le \lambda_i \le \cdots$ and $\lambda_i \to \infty$ as $i \to \infty$. 
\end{enumerate}
For details, we refer to \cite[Theorem 2.24]{RRS}. Then we define the projection $P_n : \mathcal{H} \to \mathcal{H}_n :=$span$\{c_1, c_2, \dots, c_n\}$.

We now formulate the regularized approximate system. Let $f^{n,\e}$ and $w^{n,\e}$ denote the approximate distribution function and fluid velocity, respectively. 
The system reads:
\bq\label{reg_eq_k}
\int_0^T \!\!\inttr f^{n,\e}(\pa_t \phi + v \cdot \nabla_x \phi + (\theta_\e\star w^{n,\e}+u_c^{n,\e}-v )\cdot \nabla_v  \phi)\,dvdxdt = \inttr f_0 \phi(0,\cdot,\cdot)\,dxdv,
\eq
for all $\phi \in \mathcal{C}_c^\infty([0,T)\times \T^3 \times \R^3)$ and 
\bq\label{reg_eq_m}
\begin{aligned}
& \int_{\T^3} \pa_t w^{n,\e} c_i \,dx  +   \int_{\T^3} \bigg( ((\theta_\e \star w^{n,\e}) \cdot \nabla_x)w^{n,\e} \cdot c_i  +  \tau(Dw^{n,\e}) :D c_i  +  ( u_c^{n,\e} \cdot \nabla_x ) w^{n,\e} \cdot   c_i  \bigg) \,dx\\
& = 
\int_{\T^3} \bigg[  -\theta_\e \star \lt(\int_{\R^3} (\theta_\e \star w^{n,\e} +u_c^{n,\e}-v)f^{n,\e}\,dv\rt)  + \inttr(\theta_\e\star w^{n,\e}+u_c^{n,\e}-v)f^{n,\e}\,dxdv \bigg] c_i \,dx  
\end{aligned}
\eq
for $i=1,\dots,n$, with 
\[
w^{n,\e} (t,x) = \sum_{i=1}^n \alpha_i^\e(t) c_i(x)
\] 
subject to regularized initial data
\bq\label{reg_init}
\begin{aligned}
&f^{n,\e}(0,x,v)= f_0^{n,\e}(x,v) := f_0(x,v)\mathds{1}_\e (v), \quad (x,v) \in \T^3 \times \R^3 \quad \mbox{and} \\
&  w_0^{n,\e} := w^{n,\e}(0,x)= P_n (\theta_\e \star w_0) := P_n \lt(\theta_\e\star u_0 - \intt u_0\,dx\rt),  \quad x \in \T^3,
\end{aligned}
\eq
where the velocity cutoff $\mathds{1}_\e(v)$ is defined as
\[
\mathds{1}_\e (v) := \lt\{\begin{array}{lcl}1 & |v|\le \frac1\e,\\[2mm]
0 & \mbox{otherwise}. \end{array}\rt.
\]
The term $u_c^{n,\e}(t)$ denotes the average component of the fluid velocity:
\[\begin{aligned}
u_c^{n,\e}(t):= \lt(\intt u_0\,dx\rt)e^{-\int_0^t \intt \rho_{f^{n,\e}}\,dxds }  - \int_0^t e^{-\int_s^t \intt \rho_{f^{n,\e}}\,dxdr }\intt (\rho_{f^{n,\e}} (\theta_\e\star  w^{n,\e}) - m_{f^{n,\e}})\,dxds, \quad 
\end{aligned}\]
which solves
\[\begin{aligned}
u_c^{n,\e} (t) &=\intt u_0(x)\,dx - \int_0^t \intt \lt[\rho_{f^{n,\e}}(\theta_\e \star w^{n,\e}  + u_c^{n,\e})  -m_{f^{n,\e}}\rt]\,dxds\\
&= \intt u_0(x)\,dx - \int_0^t \inttr \lt[\theta_\e \star w^{n,\e} + u_c^{n,\e}  -v\rt]f^{n,\e}\,dxdvds,
\end{aligned}\]
where
\[
\rho_{f^{n,\e}} := \intr f^{n,\e} \,dv, \quad m_{f^{n,\e}} := \intr vf^{n,\e}\,dv.
\]

The initial data are smooth and satisfy uniform bounds: $f_0^{n,\e}$ is compactly supported in $v$ and belongs to $(L^1 \cap L^\infty)(\T^3 \times \R^3)$, while $w_0^{n,\e} \in H^\beta(\T^3)$ for any $\beta > 0$ with $\nabla_x \cdot w_0^{n,\e} = 0$.
 
A few comments on the structure of the regularized system \eqref{reg_eq_k}--\eqref{reg_eq_m} are in order. The kinetic equation involves the mollified fluid velocity $\theta_\e \star w^{n,\e}$ to ensure sufficient smoothness of the transport field. In principle, since $w^{n,\e}$ lies in a finite-dimensional space, the kinetic transport equation is well-posed even without this mollification. However, mollifying the fluid velocity in the kinetic part provides better stability and consistency with the regularization in the fluid equation.

In the Non-Newtonian fluid part, the fluid velocity appearing in the drag force $\int_{\R^3} (\theta_\e \star w^{n,\e} +u_c^{n,\e}-v)f^{n,\e}\,dv$ is mollified once, and the entire drag force term is mollified again as $\theta_\e \star (\cdot)$. This double mollification is crucial for establishing the uniform energy inequality in Lemma \ref{unif_eps_est} below. If only the velocity field inside the drag force were mollified, controlling the nonlinear coupling between fluid and kinetic components would become significantly more delicate. The second mollification provides compatibility between the fluid and kinetic dissipations, allowing us to derive uniform energy estimates independent of $\varepsilon$ and $n$.

Additional regularization by convolution in the convective term is included to handle lower values of the exponent $p$. This allows the stability of the approximation for the nonlinear term to be treated via the so-called truncation (or capacity) method; see \cite{FMS97,FMS00}. That approach relies critically on the strict monotonicity of $\tau$ and is based on the construction of special divergence-free truncated test functions.
 
%
%
%
%
%
\subsection{Decoupling and fixed point formulation}
To construct solutions to the regularized system \eqref{reg_eq_k}--\eqref{reg_eq_m}, we employ a fixed point argument based on Schauder's fixed point theorem. The key idea is to decouple the system by freezing certain nonlinear terms and then constructing a solution map that returns to the same functional space. For the proper application of this strategy, the size of the time interval where weak solutions to \eqref{reg_eq_k}--\eqref{reg_eq_m} are constructed should be sufficiently small. Based on the energy inequality, this local-in-time solution will be extended to any finite time interval.

We begin by defining the Banach space
\[
E:= L^{4p'}(0,T;L^2(\T^3))  \times  L^{2p'}(0,T;L^2(\T^3))  \times \lt(L^{4p'}(0,T;V^{0,2}) \cap L^{4p'}(0,T;\mathcal{H}_n)\rt),
\]
where $p'$ denotes the H\"older conjugate of $p$, i.e.,
\[
\frac1p+\frac1{p'}=1.
\]
The space $E$ is equipped with the norm
\[
\|(\bar \rho, \bar m, \bar w) \|_E := \|\bar\rho\|_{L^{4p'}(0,T; L^2(\T^3))} + \|\bar m\|_{L^{2p'}(0,T; L^2 (\T^3))} + \|\bar w \|_{L^{4p'}(0,T;V^{0,2})}.
\]

For a given triple $(\bar\rho, \bar m, \bar w) \in E$, we consider the following decoupled system:  
\begin{equation}\label{dec_eq_k}
\int_0^T \!\!\inttr f^{n,\e}(\pa_t \phi + v \cdot \nabla_x \phi + (\theta_\e\star \bar{w}+u_c^{n,\e}-v )\cdot \nabla_v  \phi)\,dvdxdt = \inttr f_0^{n,\e} \phi(0,\cdot,\cdot)\,dxdv,
\end{equation}
for all $\phi \in \mathcal{C}_c^\infty([0,T)\times \T^3 \times \R^3)$ and 
\bq\label{dec_eq_m}
\begin{aligned}
&  \frac{d}{dt} \int_{\T^3} w^{n,\e} \cdot c_i \,dx  =   -\int_{\T^3} \bigg( ((\theta_\e \star w^{n,\e}) \cdot \nabla_x)w^{n,\e} \cdot c_i  +  \tau(Dw^{n,\e}) : D c_i  +  ( u_c^{n,\e} \cdot \nabla_x ) w^{n,\e} \cdot  c_i  \bigg) \,dx
\\ & + 
\int_{\T^3} \bigg[  -\theta_\e \star \lt[ \bar{\rho}(\theta_\e \star \bar{w} +u_c^{n,\e})-\bar{m}\rt]  + \int_{\T^3}(\bar{\rho}(\theta_\e\star \bar{w}+u_c^{n,\e})-\bar{m})\,dx \bigg] \cdot c_i \,dx \quad \mbox{ for } i=1,\dots,n
\end{aligned}
\eq
with
\[
u_c^{n,\e}(t):= \lt(\intt u_0\,dx\rt)e^{-\int_0^t \intt \bar \rho\,dxds }- \int_0^t e^{-\int_s^t \intt \bar \rho \,dxdr }\intt (\bar \rho (\theta_\e\star \bar w) - \bar m)\,dxds.
\]

This system is linear in $f^{n,\e}$ and semilinear in $w^{n,\e}$ for fixed data $(\bar\rho, \bar m, \bar w)$. Hence, for each such triple, we can (under suitable assumptions) construct a unique solution $(f^{n,\e}, w^{n,\e})$.

We then define the solution operator $\mathcal{T}: E \to E$ given as
\[
\mathcal{T}(\bar\rho, \bar m, \bar w) =(\mathcal{T}_1(\bar\rho, \bar m, \bar w), \mathcal{T}_2(\bar\rho, \bar m, \bar w), \mathcal{T}_3(\bar\rho, \bar m, \bar w))  :=(\rho_{f^{n,\e}}, m_{f^{n,\e}}, w^{n,\e}).
\]
The operator $\mathcal{T} : E \to E$ thus maps a given triplet of macroscopic quantities to the updated ones obtained from the solution of the decoupled system. Our goal is to prove that $\mathcal{T}$ admits a fixed point in $E$, which then yields a solution to the full regularized system \eqref{reg_eq_k}--\eqref{reg_eq_m} via Schauder's fixed point theorem.
%
%
%
%
%
\subsection{Existence of the regularized system \eqref{reg_eq_k}--\eqref{reg_eq_m}} In this part, we provide the global-in-time existence of weak solutions to the regularized system \eqref{reg_eq_k}--\eqref{reg_eq_m}.

%
%
%
%
%
\subsubsection{The operator $\mt$ is well-defined} 
We show that the operator $\mathcal{T}$ defined in the previous subsection is well-defined on $E$. A key tool is the following regularity lemma, which provides integrability estimates for the macroscopic density and momentum derived from a kinetic distribution function.
\begin{proposition}[{\cite[Proposition A.1]{CJ21}}]\label{PA.1}
Let $f = f(t,x,v)$ satisfy
\[
\|f\|_{L^\infty((0,T)\times \T^3 \times \R^3)}<\infty,
\]
and let $d_0 \in \N$ such that
\[
\sup_{t > 0}\iint_{\T^3 \times \R^3} (1+|v|)^\ell f(t,x,v)\,dxdv \le \infty, \quad \forall t \in (0,T), \quad \forall \, \ell \in [0,d_0].  
\]
Then, there exists a constant $C>0$ which depends on $\ell$ but independent of $f$ such that for all $t \in [0,T]$,
\[
\sup_{t >0}\|\rho_f (t)\|_{L^q} \le C\|f\|_{L^\infty((0,T)\times \T^3 \times \R^3)}^{1-\frac1q} \cdot \lt(\sup_{t > 0}\iint_{\T^3 \times \R^3} (1+|v|)^{d_0} f(t,x,v)\,dxdv\rt)^{\frac 1q}, \quad \forall \,q \in \left[1, \frac{d_0+3}3 \right)
\]
and
\[
 \sup_{t >0}\|m_f(t)\|_{L^q} \le C\|f\|_{L^\infty((0,T)\times \T^3 \times \R^3)}^{1-\frac1q} \cdot \lt(\sup_{t > 0}\iint_{\T^3 \times \R^3} (1+|v|)^{d_0} f(t,x,v)\,dxdv\rt)^{\frac 1q}, \quad \forall \, q \in \left[1, \frac{d_0+3}4 \right).
\]
\end{proposition}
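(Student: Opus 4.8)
The plan is to prove both estimates pointwise in $(t,x)$ by the standard splitting of the velocity integral at a radius $R>0$, to optimize in $R$ so as to obtain a pointwise $L^\infty$--moment interpolation inequality, and then to integrate in $x$ with the order of the velocity moment chosen so that the resulting spatial H\"older step is saturated (exponent $1$) rather than lossy; this is exactly what produces the weights $1-\frac1q$ and $\frac1q$ together with the stated ranges of $q$.

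\emph{Step 1: pointwise interpolation.} Set $m_\ell(t,x):=\intr (1+|v|)^\ell f\,dv$ for $\ell\ge 0$. For fixed $(t,x)$ and any $R>0$,
\[
\rho_f=\int_{|v|\le R}f\,dv+\int_{|v|>R}f\,dv\le c\,\|f\|_{L^\infty}R^3+R^{-\ell}m_\ell,
\]
where the first term uses $|\{|v|\le R\}|=\frac{4\pi}{3}R^3$ and the second uses $(1+|v|)^{-\ell}\le R^{-\ell}$ on $\{|v|>R\}$. Balancing the two terms in $R$ gives
\[
\rho_f(t,x)\le C\,\|f\|_{L^\infty((0,T)\times\T^3\times\R^3)}^{\frac{\ell}{\ell+3}}\,m_\ell(t,x)^{\frac{3}{\ell+3}},\qquad 0\le\ell\le d_0 .
\]
Repeating the computation with the extra factor $|v|\le 1+|v|$ inside, and using $\int_{|v|\le R}|v|\,dv=\pi R^4$ together with $|v|(1+|v|)^\ell\le(1+|v|)^{\ell+1}$, yields
\[
|m_f(t,x)|\le C\,\|f\|_{L^\infty}^{\frac{\ell}{\ell+4}}\,m_{\ell+1}(t,x)^{\frac{4}{\ell+4}},\qquad 0\le\ell\le d_0-1 .
\]

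\emph{Step 2: integration in $x$ and choice of $\ell$.} Raise the first estimate to the power $q$ and take $\ell=3(q-1)$, so that $\frac{3q}{\ell+3}=1$ and $\frac{q\ell}{\ell+3}=q-1$; the admissibility condition $\ell\le d_0$ is precisely $q\le\frac{d_0+3}{3}$, which covers the stated range. Since $m_\ell\le m_{d_0}$ pointwise for $\ell\le d_0$,
\[
\|\rho_f(t)\|_{L^q}^q\le C\,\|f\|_{L^\infty}^{q-1}\intt m_\ell(t,x)\,dx\le C\,\|f\|_{L^\infty}^{q-1}\intt m_{d_0}(t,x)\,dx,
\]
and taking $q$-th roots and then the supremum in $t$ gives the bound for $\rho_f$. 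For $m_f$ we instead take $\ell=4(q-1)$, i.e. $\ell+1=4q-3$, so that $\frac{4q}{\ell+4}=1$ and $\frac{q\ell}{\ell+4}=q-1$; the condition $\ell+1\le d_0$ reads $q\le\frac{d_0+3}{4}$, and the identical computation yields $\|m_f(t)\|_{L^q}^q\le C\,\|f\|_{L^\infty}^{q-1}\intt m_{d_0}(t,x)\,dx$.

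There is no essential obstacle here: this is the classical $L^\infty$--moments interpolation for velocity averages. The only step requiring care is the exponent bookkeeping in Step 2, namely choosing the moment order $\ell$ so that the power of $m_\ell$ after raising to the $q$-th power equals $1$ rather than a smaller number; this turns the $x$-integration into a clean $L^1$ bound and produces precisely the interpolation weights and the sharp ranges $q<\frac{d_0+3}{3}$ and $q<\frac{d_0+3}{4}$. The endpoint $q=1$ needs no interpolation, since there $\rho_f=m_0\le m_{d_0}$ and $|m_f|\le m_1\le m_{d_0}$ pointwise, and integration in $x$ closes the estimate directly.
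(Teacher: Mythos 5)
Your proof is correct and is the standard velocity-moment interpolation argument: split the $v$-integral at radius $R$, bound the inner ball by $\|f\|_{L^\infty}R^{3}$ (resp.\ $R^{4}$ for the first moment), bound the outer region by $R^{-\ell}m_\ell$ (resp.\ $R^{-\ell}m_{\ell+1}$), optimize in $R$, and then choose $\ell=3(q-1)$ (resp.\ $4(q-1)$) so the exponent on the moment after raising to the $q$-th power equals $1$, reducing the $x$-integration to an $L^1$ bound dominated by the $d_0$-moment. This is the same approach as the cited reference; the exponent bookkeeping and the treatment of the endpoint $q=1$ are handled correctly.
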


Now we show that $\mathcal{T}$ is well-defined, which is summarized in the following lemma.
\begin{lemma}\label{dec_bdd}
Let $T > 0$, $R>0$, and $p > \frac85$. For any $(\bar\rho, \bar m, \bar w) \in E$ with $\|(\bar \rho, \bar m, \bar w)\|_{E} <R$, there exists a unique solution $(f^{n,\e}, w^{n,\e})$ to \eqref{dec_eq_k}--\eqref{dec_eq_m} with initial data $(f_0^{n,\e}, w_0^{n,\e})$, and the pair satisfies
\[\begin{aligned}
&\|f^{n,\e}(t)\|_{L^r(\T^3 \times \R^3)} \le e^{3\frac{(r-1)}{r} t} \|f_0^{n,\e}\|_{L^r(\T^3 \times \R^3)} ,\\
&\sup_{t \in (0,T)} \inttr |v|^\ell f^{n,\e}\,dvdx \le  C_1 (1+ C_2 T^{\frac{3p}{4p+1}}), 
\end{aligned}\]
for any $r \in [1, \infty]$ and finite $\ell \in \N$,  where $C_1= C_1(\e, \ell, \|(1+|v|)^\ell f_0^{n,\e}\|_{L^1(\T^3\times\R^3)})$ and $C_2 = C_2(\e, \ell, \|(1+|v|)^\ell f_0^{n,\e}\|_{L^1(\T^3\times\R^3)}, \|u_0\|_{L^2(\T^3)}, T, R)$ are positive constants while $C_2$ remains bounded as $T \to 0$. Moreover,
\[
\|w^{n,\e}\|_{L^\infty(0,T;L^2(\T^3))\cap L^p(0,T;V^{1,p})}^2 +  \|\pa_t w^{n,\e}\|_{L^{p'}(0,T;\mathcal{Y}^*)}^2  \le C_4(\|w_0^{n,\e}\|_{L^2(\T^3)})(1+ C_5 T^\zeta),
\]
where $\mathcal{Y} := L^p(0,T;V^{1,p})\cap L^{s'}((0,T)\times\T^3)$, $s= \frac{5p}{8}>1$, $\zeta=\zeta(p)>0$ is a positive constant depending only on $p$, and $C_4 = C_4(\|w_0^{n,\e}\|_{L^2(\T^3)})$ and $C_5 = C_5(\e, \|u_0\|_{L^2(\T^3)}, T, R)$ are positive constants while $C_5$ remains bounded as $T\to 0$.
\end{lemma}
\begin{proof}
The kinetic equation is a linear transport equation with smooth coefficients; thus, existence and uniqueness follow from the method of characteristics (see \cite{DL89}). Standard arguments and their slight modifications yield $L^r$ and velocity moment estimates (see \cite[Lemma 4.1]{CCK16}).  Here, for readers' convenience, we present the velocity moment estimates. First, from \eqref{dec_eq_k} we obtain
\[\begin{aligned}
\frac{d}{dt}\inttr |v| f^{n,\e}\,dxdv &= \inttr \frac{v}{|v|}\cdot (\theta_\e \star \bar w + u_c^{n,\e} - v) f^{n,\e}\,dxdv\\
&\le \lt(\|\theta_\e \star \bar w\|_{L^\infty(\T^3)} + |u_c^{n,\e}| \rt) \|f^{n,\e}\|_{L^1(\T^3\times\R^3)}\\
&\le \lt(\|\theta_\e \star \bar w\|_{L^\infty(\T^3)} + |u_c^{n,\e}| \rt) \|f_0^{n,\e}\|_{L^1(\T^3\times\R^3)}.
\end{aligned}\]
Let us estimate now the average component $u_c^{n,\e}$. From its explicit formula, we have
\[\begin{aligned}
|u^{n,\e}_c(t)| &\le \lt(\|u_0\|_{L^1(\T^3)} + \int_0^t \| \bar \rho(\theta_\e \star \bar w) - \bar m\|_{L^1(\T^3)}\,ds \rt)\exp\lt( \int_0^t \|\bar \rho\|_{L^1(\T^3)}\,ds\rt)\\
&\le \lt( \|u_0\|_{L^1(\T^3)} + \int_0^t \lt(\|\bar \rho\|_{L^2(\T^3)} \|\bar w\|_{L^2(\T^3)} + \|\bar m\|_{L^2(\T^3)}\rt)\,ds\rt)e^{T^{\frac{3p+1}{4p}} \|(\bar \rho, \bar m, \bar w)\|_{E}}\\
&\le  \lt( \|u_0\|_{L^1(\T^3)} + T^{\frac{p+1}{2p}}(1+\|(\bar \rho, \bar m, \bar w)\|_{E})^2 \rt)e^{T^{\frac{3p+1}{4p}} \|(\bar \rho, \bar m, \bar w)\|_{E}}\\
&\le C(\|u_0\|_{L^2(\T^3)}, T, R),
\end{aligned}\]
where $C=C(\|u_0\|_{L^2(\T^3)}, T, R)$ is a positive and remains bounded as $T \to 0$. Together with 
\[
\|\theta_\e \star\bar w\|_{L^\infty(\T^3)} \le C(\e) \|\bar w\|_{L^2(\T^3)},
\]
we get
\[\begin{aligned}
\inttr |v| f^{n,\e}\,dxdv &\le \inttr |v| f_0^{n,\e}\,dxdv + C(\e) \|f_0^{n,\e}\|_{L^1(\T^3\times\R^3)}\int_0^t \|\bar w\|_{L^2(\T^3)}\,ds\\
&\quad + T C_1(\|u_0\|_{L^2(\T^3)}, T, R)\|f_0^{n,\e}\|_{L^1(\T^3\times\R^3)}\\
&\le C_1 (1+ C_2 T^{\frac{3p}{4p+1}}),
\end{aligned}
\]
where $C_1= C_1(\e, \|(1+|v|)f_0^{n,\e}\|_{L^1(\T^3\times\R^3)})$ and $C_2:=C_2(\|u_0\|_{L^2(\T^3)}, T, R))$ are positive constants and $C_2$ remains bounded as $T \to 0$.

Then inductively, for $\ell \in \N$,
\[\begin{aligned}
\frac{d}{dt}& \inttr \frac{|v|^\ell}{\ell} f^{n,\e}\,dxdv \\
&= \inttr |v|^{\ell-2}v \cdot (\theta_\e \star \bar w + u_c^{n,\e} - v) f^{n,\e}\,dxdv\\
&\le \lt(\|\theta_\e \star \bar w\|_{L^\infty(\T^3)} + |u_c^{n,\e}| \rt) \inttr |v|^{\ell-1}f^{n,\e}\,dxdv\\
&\le C_1(\e, \ell-1, \|(1+|v|)^{\ell-1}f_0^{n,\e}\|_{L^1(\T^3\times\R^3)}, \|u_0\|_{L^2(\T^3)}, T, R)(1+\|\bar w\|_{L^2(\T^3)})
\end{aligned}\]
and hence, we also get
\[\begin{aligned}
&\inttr |v|^\ell f^{n,\e}\,dxdv \\
&\quad\le \inttr |v|^\ell f_0^{n,\e}\,dxdv+ TC_1(\e, \ell, \|f_0^{n,\e}\|_{L^1(\T^3\times\R^3)}, \|u_0\|_{L^2(\T^3)}, T, R)\\
&\qquad + T^{\frac{3p+1}{4p}}C_1(\e, \ell, \|f_0^{n,\e}\|_{L^1(\T^3\times\R^3)}, \|u_0\|_{L^2(\T^3)}, T, R)\\
&\quad\le C_1 (1+ C_2 T^{\frac{3p}{4p+1}})
\end{aligned}\]
where $C_1= C_1(\e, \ell, \|(1+|v|)^\ell f_0^{n,\e}\|_{L^1(\T^3\times\R^3)})$ and $C_2 = C_2(\e, \ell, \|(1+|v|)^\ell f_0^{n,\e}\|_{L^1(\T^3\times\R^3)}, \|u_0\|_{L^2(\T^3)}, T, R)$ are positive constants and $C_2$ remains bounded as $T \to 0$, completing the induction.

For the fluid equation, let us write the forcing term as
\[
F^n := - \theta_\e \star [\bar\rho(\theta_\e \star \bar w +u_c^{n,\e})-\bar m] + \intt[\bar\rho(\theta_\e \star \bar w+u_c^{n,\e})-\bar m]\,dx.
\]
This is a time-dependent external force in a finite-dimensional Galerkin system. Existence and uniqueness follow from ODE theory. Moreover, we easily obtain
\[
\frac12\frac{d}{dt}\|w^{n,\e}\|_{L^2}^2 + c \|w^{n,\e}\|_{V^{1,p}}^p \le \intt w^{n,\e} \cdot F^n\,dx, \quad \frac{d}{dt}\intt w^{n,\e}(t)\,dx = 0. 
\]
Since
\[\begin{aligned}
\|F^n\|_{L^{p'}(\T^3)} &\le C(\e) \| \bar\rho (\theta_\e \star \bar w  + u_c^{n,\e}) - \bar m\|_{L^1(\T^3)} \\
&\le C(\e) \lt(\|\bar \rho\|_{L^2(\T^3)} (\|\bar w\|_{L^2(\T^3)} + |u_c^{n,\e}|) + \|\bar m\|_{L^2(\T^3)}\rt),
\end{aligned}\]
the Young inequality and the Poincar\'e inequality give
\[
\frac{d}{dt} \|w^{n,\e}\|_{L^2(\T^3)}^2 + c \|w^{n,\e}\|_{V^{1,p}}^p \le C\|F^n\|_{L^{p'}(\T^3)}^{p'}.
\]
Hence, we have
\[\begin{aligned}
\|&w^{n,\e}\|_{L^2(\T^3)}^2 + c\int_0^t \|w^{n,\e}\|_{V^{1,p}}^p\,ds \\
&\le \|w_0^{n,\e}\|_{L^2(\T^3)}^2 + C\int_0^t \|F^n\|_{L^{p'}(\T^3)}^{p'}\,ds\\
&\le \|w_0^{n,\e}\|_{L^2(\T^3)}^2 + C(\e) \lt( T^{1/2}R^2 + T^{\frac{3p+1}{4p}}C_1(\|u_0\|_{L^2(\T^3)}, T,R)\rt)\\
&\le \|w_0^{n,\e}\|_{L^2(\T^3)}^2 + C_3T^{1/2},
\end{aligned}\]
where $C_3=C_3(\e,\|u_0\|_{L^2}, T, R)$ is a positive and remains bounded as $T\to 0$. 
This implies the desired bound for $w^{n,\e}$ in $L^\infty(0,T;V^{0,2})\cap L^p(0,T;V^{1,p})$. By the Gagliardo--Nirenberg inequality, we also obtain $w^{n,\e} \in L^{\frac{5p}{3}}((0,T)\times\T^3)$. Since the convolution is taken with a fixed mollifier $\theta_\e$, it follows that $(\theta_\e \star w^{n,\e})\cdot \nabla_x w^{n,\e} \in L^s((0,T)\times \T^3)$:
\bq\label{w_est_sp}
\begin{aligned}
\|w^{n,\e}\|_{L^{\frac{5p}{3}}((0,T)\times\T^3) } &\le C\|\nabla_x w^{n,\e}\|_{L^p((0,T)\times\T^3)}^{\frac35} \|w^{n,\e}\|_{L^\infty(0,T;L^2(\T^3))}^{\frac25} \\
&\le C\lt(  \|w_0^{n,\e}\|_{L^2(\T^3)}^2 + C_3T^{1/2}\rt)^{\frac{p+3}{5p}},\\
 \|((\theta_\e \star w^{n,\e})\cdot \nabla_x ) w^{n,\e}\|_{L^s((0,T)\times\T^3)}&\le \|\nabla_x w^{n,\e}\|_{L^p((0,T)\times\T^3)}\|w^{n,\e}\|_{L^{\frac{5p}{3}}((0,T)\times\T^3) }\\
&\le  C\lt(\|w_0^{n,\e}\|_{L^2(\T^3)}^2 + T^{1/2} C_1\lt(\e, \|u_0\|_{L^2(\T^3)}, T,R\rt)\rt)^{\frac{p+8}{5p}}.
\end{aligned}\eq
Concerning the  bound for $\pa_t w^{n,\e}$, choose $\phi \in \mathcal{Y}$ to deduce
\begin{align*}
\lt|\int_0^T \intt \pa_t w^{n,\e} \cdot \phi \,dxdt \rt| &\le  \|( (\theta_\e \star w^{n,\e})  \cdot \nabla_x) w^{n,\e}\|_{L^s( (0,T)\times \T^3)}
\| \phi\|_{L^{s'}((0,T)\times \T^3)} \\
&\quad + C\int_0^T \|\nabla_x \phi(t)\|_{L^p(\T^3)}\|(1 + |Dw^{n,\e}|)\|_{L^p(\T^3)}^{p-1}\,dt \\
&\quad + \|(u_c^{n,\e} \cdot \nabla_x ) w^{n,\e}\|_{L^{s}((0,T)\times\T^3))}\| \phi\|_{L^{s'}((0,T)\times \T^3)} \\
&\quad + \| F^n \|_{L^{p'}((0,T)\times \T^3)} \|\phi\|_{L^p((0,T)\times \T^3)} \\
&\le C\lt(\|w_0^{n,\e}\|_{L^2(\T^3)}^2 + C_3 T^{1/2} \rt)^{\frac{p+8}{5p}}\|\phi\|_{L^{s'}((0,T)\times\T^3)}\\
&\quad + C( 1+ \|\nabla_x w^{n,\e}\|_{L^p((0,T)\times \T^3)}^p)\|\phi\|_{L^p(0,T;V^{1,p})}\\
&\quad +C \lt(  \|w_0^{n,\e}\|_{L^2(\T^3)}^2 + C_3T^{1/2}  \rt)^{\frac{11}{5p}} \| \phi\|_{L^{s'}((0,T)\times \T^3)}   \\
&\quad + T^{\frac{p-1}{2p}}C_1(\e,\|u_0\|_{L^2(\T^3)}, T, R) \|\phi\|_{L^p(0,T;V^{1,p})}\\
&\le C_4(1+ C_5T^\zeta ),
\end{align*}
where $\zeta = \min\lt\{ \frac{p+8}{10p}, \frac{p-1}{2p}, \frac{11}{10p}\rt\}$, $C_4 = C_4(\|w_0^{n,\e}\|_{L^2(\T^3)})$ and $C_5 = C_5(\e, \|u_0\|_{L^2(\T^3)}, T, R)$ are positive constants while $C_5$ remains bounded as $T\to 0$, and we used
\[\begin{aligned}
\|(u_c^{n,\e}\cdot \nabla_x) w^{n,\e}\|_{L^s((0,T)\times\T^3)} &\le \|\nabla_x w^{n,\e}\|_{L^p((0,T)\times\T^3)}\|u_c^{n,\e}\|_{L^{\frac{5p}{3}}(0,T)}\\
&\le \lt(  \|w_0^{n,\e}\|_{L^2(\T^3)} + T^{1/4} C_1\lt(\e, \|u_0\|_{L^2(\T^3)}, T,R)\rt) \rt)^{\frac 2p} \cdot T^{\frac {3}{5p}} C_1(\|u_0\|_{L^2(\T^3)}, T,R)\\
&\le C\lt(  \|w_0^{n,\e}\|_{L^2(\T^3)}^2 + \max\{C_1, C_3\}T^{1/2}  \rt)^{\frac{11}{5p}}. 
\end{aligned}\]
Thus, we have the desired bound for $\pa_t w^{n,\e}$ and this completes the proof.
\end{proof}

%
%
%
%
%
\subsubsection{Compactness of the solution map}
To apply Schauder's fixed point theorem, we must show that $\mathcal{T}$ is compact. For this, we use the following variant of the velocity averaging lemma.
\begin{lemma}[{\cite[Lemma 2.7]{KMT13}}]\label{vel_av}
For $p\in (1,\infty]$, let $\{f^m\}_{m\in \N}$ be compactly supported in velocity uniformly in $m$ and $\{G^m\}_{m\in \N}$ be bounded in $L_{loc}^p([0,T]\times\T^3\times\R^3)$. Suppose that $f^m$ and $G^m$ satisfy
\[
\pa_t f^m + v \cdot \nabla_x f^m = \nabla_v^\ell G^m, \quad f^m|_{t=0} = f_0 \in L^p(\T^3 \times \R^3)
\] 
for some multi-index $\ell$ and 
\[
\sup_{m\in\N}\lt(\|f^m\|_{L^\infty((0,T)\times \T^3\times\R^3)} +  \||v|^\beta f^m\|_{L^\infty(0,T; L^1(\T^3\times\R^3))}\rt)<\infty,
\]
for some $\beta>1$. Then the sequences
\[
\lt\{ \intr f^m  \,dv\rt\}_{m\in \N} , \quad \lt\{ \intr vf^m  \,dv\rt\}_{m\in \N} 
\]
are relatively compact in $L^{q_1}((0,T)\times\R^3)$ and  $L^{q_2}((0,T)\times\R^3)$ for $q_1 \in \lt(1, \frac{3+\beta}{3}\rt)$ and $q_2 \in \lt(1, \frac{3+\beta}{4}\rt)$, respectively.
\end{lemma}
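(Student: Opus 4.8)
The plan is to deduce the statement from the classical $L^q$ velocity averaging lemma and then to spread the resulting $L^q$‑compactness over the full range of exponents by interpolating against the uniform $L^\infty$ and weighted‑$L^1$ bounds.

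First I would reduce matters to averaging against a \emph{fixed} weight. By hypothesis all $f^m$ are supported, in velocity, in a common ball $\overline{B_R}\subset\R^3$; fix $\psi\in\mathcal{C}_c^\infty(\R^3)$ with $\psi\equiv1$ on $B_R$. Then $\intr f^m\,dv=\intr f^m\psi(v)\,dv$ and $\intr vf^m\,dv=\intr f^m\,(v\psi(v))\,dv$ identically in $m$, and $v\mapsto v\psi(v)$ again lies in $\mathcal{C}_c^\infty(\R^3)$; so it suffices to show that $\bigl\{\intr f^m\chi(v)\,dv\bigr\}_m$ is relatively compact in some $L^q((0,T)\times\T^3)$ for every fixed $\chi\in\mathcal{C}_c^\infty(\R^3)$, and then to feed in the moment bounds.

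For the compactness I would set $q:=\min(p,2)\in(1,2]$. The uniform $L^\infty$ bound and the common compact $v$‑support give $\{f^m\}$ bounded in $L^q((0,T)\times\T^3\times\R^3)$, while $\{G^m\}$ is bounded in $L^q\bigl((0,T)\times\T^3\times\operatorname{supp}\nabla_v^\ell\chi\bigr)$ — the only region affecting the average, since $\intr\nabla_v^\ell G^m\,\chi\,dv=(-1)^{|\ell|}\intr G^m\,\nabla_v^\ell\chi\,dv$. After a standard localization in time (multiply the transport equation by $\zeta\in\mathcal{C}_c^\infty(-1,T)$ to pass to the whole line, the datum $f_0\in L^p$ entering only through the extra right‑hand side $\zeta'f^m$), the $L^q$ velocity averaging lemma (see \cite{KMT13} and the references therein) produces a uniform bound on $\bigl\{\intr f^m\chi(v)\,dv\bigr\}_m$ in $W^{\sigma,q}((0,T)\times\T^3)$ for some $\sigma=\sigma(q,|\ell|)>0$; the gain is the usual smoothing of the kinetic transport operator, visible on the Fourier side by splitting with respect to the size of the symbol $\tau+v\cdot\xi$ and exploiting, on the resonant set, the nondegeneracy of $v\mapsto v\cdot\xi$. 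Combined with the uniform $L^q$ bound and the Rellich--Kondrachov theorem on $(0,T)\times\T^3$, this yields relative compactness of $\bigl\{\intr f^m\chi\,dv\bigr\}_m$ in $L^q((0,T)\times\T^3)$; taking $\chi=\psi$ and $\chi=v\psi$ settles $\bigl\{\intr f^m\,dv\bigr\}_m$ and $\bigl\{\intr vf^m\,dv\bigr\}_m$ in $L^q$.

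It then remains to enlarge the exponent. By Proposition \ref{PA.1}, or simply by Hölder interpolation between the $L^1_v$ and $L^\infty_v$ bounds weighted by $|v|^\beta$ (with the common compact $v$‑support these bounds are immediate from the $L^\infty$ bound), $\bigl\{\intr f^m\,dv\bigr\}_m$ is bounded in $L^\infty(0,T;L^{\tilde q}(\T^3))\hookrightarrow L^{\tilde q}((0,T)\times\T^3)$ for every $\tilde q<\frac{3+\beta}{3}$, and $\bigl\{\intr vf^m\,dv\bigr\}_m$ in $L^{\tilde q}((0,T)\times\T^3)$ for every $\tilde q<\frac{3+\beta}{4}$. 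Given a target $q_1\in\bigl(1,\frac{3+\beta}{3}\bigr)$, pick $\tilde q\in\bigl(\max\{q_1,q\},\frac{3+\beta}{3}\bigr)$: when $q\le q_1$ the inequality $\|g\|_{L^{q_1}}\le\|g\|_{L^q}^{\theta}\|g\|_{L^{\tilde q}}^{1-\theta}$ upgrades $L^q$‑compactness to $L^{q_1}$‑compactness, and when $q_1<q$ the continuous embedding $L^q\hookrightarrow L^{q_1}$ (valid since $(0,T)\times\T^3$ has finite measure) does the same. Hence $\bigl\{\intr f^m\,dv\bigr\}_m$ is relatively compact in $L^{q_1}((0,T)\times\T^3)$ for all $q_1\in\bigl(1,\frac{3+\beta}{3}\bigr)$, and the identical argument with the momentum bound gives relative compactness of $\bigl\{\intr vf^m\,dv\bigr\}_m$ in $L^{q_2}((0,T)\times\T^3)$ for all $q_2\in\bigl(1,\frac{3+\beta}{4}\bigr)$, this last range being nonempty precisely because $\beta>1$.

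The one substantial step is the velocity averaging estimate: $\{f^m\}$ and $\{G^m\}$ are only bounded, not compact, in $(t,x)$, and the relative compactness of the $v$‑average is a genuine regularization effect resting on both the transport structure and the nondegeneracy of $v\cdot\xi$; in the paper this is simply quoted from \cite{KMT13}. Everything else — the reduction to a fixed cutoff, the time localization, and the interpolation — is routine, the only point needing attention being that all constants stay uniform in $m$, which holds since all the input bounds are.
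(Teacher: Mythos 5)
The paper does not prove this lemma: it is quoted verbatim from \cite{KMT13} and used as a black box, so there is no ``paper's own proof'' to compare against. Your reconstruction is essentially the standard route to such a statement and looks sound. The reduction to a fixed cutoff $\chi\in\mathcal{C}_c^\infty(\R^3)$ via the common compact $v$-support, the time localization, the appeal to the $L^q$ velocity averaging lemma (with $q=\min(p,2)$, which is forced since for $p<2$ you only control $G^m$ in $L^p_{loc}$) with a $\nabla_v^\ell$ on the right-hand side to gain $W^{\sigma,q}_{t,x}$ regularity, Rellich--Kondrachov, and then interpolation against the uniform higher-integrability bounds are all correct steps, and you are careful to note that the constants are uniform in $m$. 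One remark worth making explicit: under the hypotheses as stated here (uniform compact $v$-support plus the $L^\infty$ bound), the density and momentum are in fact bounded in $L^\infty_{t,x}$ outright, so your interpolation step actually yields relative compactness in $L^q((0,T)\times\T^3)$ for every $q<\infty$, not merely for $q_1<\frac{3+\beta}{3}$ and $q_2<\frac{3+\beta}{4}$; the stated ranges are the ones dictated by the moment hypothesis alone (as in Proposition~\ref{PA.1}) and match the original formulation in \cite{KMT13}, where uniform compact support is not assumed in this strong form. So your proof establishes a slightly stronger conclusion than the lemma claims, which is harmless.
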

Then, we have the following.
\begin{lemma}
Let $T > 0$, $p > 8/5$, and let $\{(\bar\rho_k, \bar m_k, \bar w_k)\}_k$ be a uniformly bounded sequence in $E$. Then, up to a subsequence, $\{\mathcal{T}(\bar\rho_k, \bar m_k, \bar w_k)\}_k$ converges strongly in $E$.
\end{lemma}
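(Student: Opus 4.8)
The plan is to analyze the three components of $\mathcal{T}=(\mathcal{T}_1,\mathcal{T}_2,\mathcal{T}_3)$ separately: the macroscopic moments $\rho_{f^{n,\e}}$, $m_{f^{n,\e}}$ via the velocity averaging Lemma \ref{vel_av}, and the Galerkin fluid component $w^{n,\e}$ via a finite-dimensional compactness argument. Write $(f_k,w_k)$ for the unique solution of the decoupled system \eqref{dec_eq_k}--\eqref{dec_eq_m} associated to $(\bar\rho_k,\bar m_k,\bar w_k)$, and set $M:=\sup_k\|(\bar\rho_k,\bar m_k,\bar w_k)\|_E<\infty$. Since $n$, $\e$, $T$ are fixed and every constant in Lemma \ref{dec_bdd} depends on the data only through $M$, all the bounds there hold \emph{uniformly in} $k$; in particular $\sup_k\|f_k\|_{L^\infty((0,T)\times\T^3\times\R^3)}<\infty$, $\sup_k\sup_{t\in(0,T)}\inttr|v|^\ell f_k\,dvdx<\infty$ for every $\ell\in\N$, $\sup_k\|w_k\|_{L^\infty(0,T;L^2)\cap L^p(0,T;V^{1,p})}<\infty$, and $\sup_k\|\pa_t w_k\|_{\mathcal{Y}^*}<\infty$.

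First I would show that the $f_k$ have a fixed compact velocity support. Along the characteristics $\dot X=V$, $\dot V=\theta_\e\star\bar w_k(t,X)+u_c^{n,\e}-V$ of \eqref{dec_eq_k}, Duhamel's formula together with $\|\theta_\e\star\bar w_k(t)\|_{L^\infty_x}\le\|\theta_\e\|_{L^2}\|\bar w_k(t)\|_{L^2}$, the bound $\bar w_k\in L^{2p'}(0,T;L^2)$, and the uniform estimate $|u_c^{n,\e}(t)|\le C$ from the proof of Lemma \ref{dec_bdd} give $|V(t)|\le|V(0)|+C$; since $f_0^{n,\e}$ is supported in $\{|v|\le1/\e\}$, we obtain $\mathrm{supp}_v f_k(t,\cdot,\cdot)\subseteq B(0,R)$ for some $R$ independent of $k$ and $t\in[0,T]$, whence $\||v|^\beta f_k\|_{L^\infty(0,T;L^1)}\le R^\beta\|f_k\|_{L^\infty(0,T;L^1)}$ is uniformly bounded in $k$ for every $\beta>1$. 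Writing the kinetic equation as $\pa_t f_k+v\cdot\nabla_x f_k=\nabla_v\cdot G_k$ with $G_k:=-(\theta_\e\star\bar w_k+u_c^{n,\e}-v)f_k$, the bound $|G_k|\le(R+C+\|\theta_\e\|_{L^2}\|\bar w_k(t)\|_{L^2})\|f_k(t)\|_{L^\infty}\mathbf{1}_{\{|v|\le R\}}$ shows that $\{G_k\}$ is bounded in $L^2_{loc}([0,T]\times\T^3\times\R^3)$ (using $2p'>2$). Lemma \ref{vel_av} (with exponent $2$ and $\beta$ taken as large as needed) then yields relative compactness of $\{\rho_{f_k}\}$ in $L^{q_1}((0,T)\times\T^3)$ and of $\{m_{f_k}\}$ in $L^{q_2}((0,T)\times\T^3)$ for arbitrarily large finite $q_1,q_2$; choosing $q_1\ge2p'$ and $q_2\ge\max(p',2)$ and using the continuous embeddings $L^{q_1}((0,T)\times\T^3)\hookrightarrow L^{2p'}((0,T)\times\T^3)$ and $L^{q_2}((0,T)\times\T^3)\hookrightarrow L^{p'}(0,T;L^2(\T^3))$ on the bounded domain, we extract a subsequence with $\rho_{f_k}\to\rho$ in $L^{2p'}((0,T)\times\T^3)$ and $m_{f_k}\to m$ in $L^{p'}(0,T;L^2(\T^3))$, i.e. strong convergence of $\mathcal{T}_1$ and $\mathcal{T}_2$ in the first two factors of $E$.

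For the fluid component, finite-dimensionality does the work. Since $w_k(t)\in\mathcal{H}_n$ and all norms on $\mathcal{H}_n$ are equivalent, $\{w_k\}$ is bounded in $L^\infty(0,T;\mathcal{H}_n)$; writing $w_k=\sum_{i=1}^n\alpha_i^k c_i$ and pairing $\pa_t w_k$ with $g(t)c_i$ for $g\in C_c^\infty(0,T)$, which equals $\int_0^T(\alpha_i^k)'(t)g(t)\,dt$ by the $L^2$-orthonormality of $\{c_i\}$, we get $\big|\int_0^T(\alpha_i^k)'(t)g(t)\,dt\big|\le\|\pa_t w_k\|_{\mathcal{Y}^*}\|g c_i\|_{\mathcal{Y}}\le C\|g\|_{L^{r_0}(0,T)}$ with $r_0:=\max(p,s')$ and $s=\tfrac{5p}{8}>1$, so $\{(\alpha_i^k)'\}$ is bounded in $L^{r_0'}(0,T)$ with $r_0'=\min(p',s)>1$; hence $\{\alpha^k\}$ is bounded in $W^{1,r_0'}(0,T;\R^n)$ and, since $r_0'>1$, relatively compact in $C([0,T];\R^n)$. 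Thus, along a subsequence, $w_k\to w$ in $C([0,T];\mathcal{H}_n)$, hence in $L^{2p'}(0,T;V^{0,2})\cap L^{2p'}(0,T;\mathcal{H}_n)$, which is strong convergence of $\mathcal{T}_3$ in the third factor of $E$. Combining the three, we obtain a single subsequence along which $\{\mathcal{T}(\bar\rho_k,\bar m_k,\bar w_k)\}_k$ converges strongly in $E$.

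I expect the main obstacle to be the velocity-averaging step: one must secure the uniform-in-$k$ compact velocity support and the uniform $L^p_{loc}$ bound on the flux $G_k$ while having only $L^{2p'}_tL^\infty_x$ control of the mollified transport field $\theta_\e\star\bar w_k$ (rather than $L^\infty_{t,x}$), and then upgrade the isotropic space--time compactness produced by Lemma \ref{vel_av} to the anisotropic norm $L^{p'}(0,T;L^2(\T^3))$ built into $E$ --- which is exactly why the freedom to raise the moment exponent $\beta$ is needed. The fluid part is comparatively routine because $w_k$ stays in a fixed finite-dimensional space.
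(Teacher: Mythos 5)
Your argument is correct, and it diverges from the paper's in two places worth noting. For the fluid component, the paper invokes the Aubin--Lions lemma (with the time-derivative bound in $\mathcal{Y}^*$ from Lemma~\ref{dec_bdd}) to get relative compactness in $L^p(0,T;V^{0,2})$, then interpolates against $L^\infty(0,T;V^{0,2})$ to reach $L^{2p'}(0,T;V^{0,2})$; you instead exploit that $w_k(t)$ lies in the \emph{fixed} finite-dimensional space $\mathcal{H}_n$, derive a uniform $W^{1,r_0'}(0,T;\R^n)$ bound on the coefficient vector $\alpha^k$ by testing $\pa_t w_k$ against $g(t)c_i$, and conclude $C([0,T];\mathcal{H}_n)$ compactness via Arzel\`a--Ascoli. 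Your route is more elementary, avoids the interpolation step (which in the paper is phrased as if $2p'>p$, a condition that depends on whether $p<3$), and gives uniform-in-time convergence for free. For the kinetic component, the paper's proof is essentially a one-line appeal to Lemma~\ref{vel_av}; you supply the missing details -- the uniform-in-$k$ compact velocity support obtained from the characteristic ODE together with $\theta_\e\star\bar w_k\in L^{2p'}(0,T;L^\infty_x)$ and $|u_c^{n,\e}|\le C$, the resulting $L^2_{loc}$ bound on the flux $G_k$, and the choice of a large moment exponent $\beta$ so that the averaging lemma produces compactness in $L^{q_1}$ and $L^{q_2}$ with $q_1\ge 2p'$, $q_2\ge\max(p',2)$, which then embeds continuously on the bounded cylinder $(0,T)\times\T^3$ into the exact norms appearing in $E$. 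This closes a step the paper leaves implicit. One cosmetic point: the conclusion of Lemma~\ref{vel_av} as printed reads $L^{q_1}((0,T)\times\R^3)$, evidently a typo for $L^{q_1}((0,T)\times\T^3)$, which you interpret correctly.
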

\begin{proof}
By Lemma \ref{dec_bdd}, the sequence $\{w_k := \mathcal{T}_3(\bar\rho_k, \bar m_k, \bar w_k)\}_k$ is uniformly bounded in $L^\infty(0,T;V^{0,2}) \cap L^p(0,T;V^{1,p})$ with bounded time derivative in $L^{p'}(0,T;\mathcal{Y}^*)$. By the Aubin--Lions lemma, we conclude that $\{w_k\}_k$ is relatively compact in $L^p(0,T;V^{0,2})$. Moreover , since $\{w_k\}_k$ is uniformly bounded in $L^\infty(0,T;V^{0,2})$, they are relatively compact in $L^{4p'}(0,T;V^{0,2})$. 

For the kinetic components, the uniform bounds on $f_k := f^{n,\e}_k$ and its velocity moments allow us to apply the velocity averaging lemma (Lemma \ref{vel_av}), yielding strong compactness of $\rho_{f_k}$ and $m_{f_k}$. Hence $\mathcal{T}$ is compact.
\end{proof}

The above estimates and compactness properties allow us to employ Schauder's fixed point theorem and obtain the existence of a fixed point:
\begin{lemma}\label{unif_eps_est}
Let $T > 0$ and $p > \frac85$. Then there exists a solution $(f^{n,\e}, w^{n,\e})$ to the regularized system \eqref{reg_eq_k}--\eqref{reg_init}, which satisfies the following estimates:
\bq\label{unif_est_soln}
\begin{aligned}
&\sup_{t \in (0,T)}\|f^{n,\e}(t)\|_{L^r(\T^3 \times \R^3)} \le e^{3T}\|f_0^{n,\e}\|_{L^r(\T^3 \times \R^3)} \quad \mbox{ for } r\in [1,\infty],\\
&\sup_{t \in (0,T)}\lt(\inttr \frac{|v|^2}{2} f^{n,\e}\,dvdx + \intt\frac{|w^{n,\e}|^2}{2}\,dx + \frac{|u_c^{n,\e}|^2}{2}\rt) + \int_0^T\intt \mu |D w^{n,\e}|^p\,dxdt \\
&\qquad+ \int_0^T \inttr |\theta_\e \star w^{n,\e}+u_c^{n,\e}-v|^2 f^{n,\e}\,dvdxdt\\
& \le \lt(\inttr \frac{|v|^2}{2} f_0^\e\,dvdx + \intt\frac{|w_0^{n,\e}|^2}{2}\,dx + \frac{|u_c^{n,\e}(0)|^2}{2}\rt).
\end{aligned}
\eq
\end{lemma}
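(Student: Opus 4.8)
The plan is to combine the well-posedness and compactness results already established for the solution map $\mt$ with Schauder's fixed point theorem, and then to derive the uniform energy inequality \eqref{unif_est_soln} by testing the regularized equations against their natural test functions. First I would invoke the previous two lemmas: Lemma \ref{dec_bdd} shows that $\mt:E\to E$ is well-defined and maps the whole space $E$ into a bounded (in fact, into a fixed closed ball determined by the data, after checking that the constants $C(n,\|(\bar\rho,\bar m,\bar w)\|_E,T)$ stabilize on a large enough ball via a standard absorption/scaling argument), while the compactness lemma shows $\mt$ is compact. Continuity of $\mt$ on $E$ follows from the linearity of the kinetic transport equation in $f^{n,\e}$, the semilinear (finite-dimensional) structure of the $w^{n,\e}$-equation, and the same uniform bounds used for compactness, which upgrade weak convergence of the inputs to strong convergence of the outputs along subsequences; since limits are unique, the full sequence converges. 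Schauder's theorem then produces a fixed point $(\rho_{f^{n,\e}},m_{f^{n,\e}},w^{n,\e})=\mt(\rho_{f^{n,\e}},m_{f^{n,\e}},w^{n,\e})$, which by construction solves the coupled regularized system \eqref{reg_eq_k}--\eqref{reg_eq_m} with data \eqref{reg_init}.

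For the $L^r$ bounds on $f^{n,\e}$, these are immediate from the transport structure: along characteristics of the field $v\mapsto \theta_\e\star w^{n,\e}+u_c^{n,\e}-v$, whose velocity divergence equals $-3$, one gets $\frac{d}{dt}\|f^{n,\e}(t)\|_{L^r}^r = 3(r-1)\|f^{n,\e}(t)\|_{L^r}^r$, hence $\|f^{n,\e}(t)\|_{L^r}\le e^{3\frac{r-1}{r}t}\|f_0^{n,\e}\|_{L^r}\le e^{3T}\|f_0^{n,\e}\|_{L^r}$, with the $r=\infty$ case by passing to the limit. This reproduces the first line of \eqref{unif_est_soln} and also shows $\|f_0^\e\|_{L^r}$ notation is consistent with $\|f_0^{n,\e}\|_{L^r}$ since the truncation $\mathds{1}_\e$ is $n$-independent.

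The heart of the argument is the energy identity. I would test the kinetic equation \eqref{reg_eq_k} with $\phi=\tfrac12|v|^2$ (justified by the velocity moment bounds from Lemma \ref{dec_bdd}) to obtain
\[
\frac{d}{dt}\inttr \frac{|v|^2}{2}f^{n,\e}\,dvdx = \inttr (\theta_\e\star w^{n,\e}+u_c^{n,\e}-v)\cdot v\, f^{n,\e}\,dvdx.
\]
Next, test the Galerkin equation \eqref{reg_eq_m} with $w^{n,\e}=\sum_i\alpha_i^\e c_i$: the convective terms $(\theta_\e\star w^{n,\e}\cdot\nabla_x)w^{n,\e}\cdot w^{n,\e}$ and $(u_c^{n,\e}\cdot\nabla_x)w^{n,\e}\cdot w^{n,\e}$ vanish upon integration (divergence-free, constant-in-$x$ fields), the stress term gives $\intt\tau(Dw^{n,\e}):Dw^{n,\e}=\mu\intt|Dw^{n,\e}|^p$, and the forcing term produces, after using $\intt\theta_\e\star g\cdot w^{n,\e}=\intt g\cdot(\theta_\e\star w^{n,\e})$, the expression $-\intt\inttr(\theta_\e\star w^{n,\e}+u_c^{n,\e}-v)f^{n,\e}\cdot(\theta_\e\star w^{n,\e})\,dv dx$ plus a term with the $x$-independent factor $\intt w^{n,\e}=0$ that drops out. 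Then multiply the ODE for $u_c^{n,\e}$ by $u_c^{n,\e}$ to get $\tfrac12\frac{d}{dt}|u_c^{n,\e}|^2 = -u_c^{n,\e}\cdot\inttr(\theta_\e\star w^{n,\e}+u_c^{n,\e}-v)f^{n,\e}\,dvdx$. Adding the three identities, the cross terms recombine (this is precisely what the \emph{double} mollification of the drag force is designed to ensure) into $-\inttr|\theta_\e\star w^{n,\e}+u_c^{n,\e}-v|^2 f^{n,\e}\,dvdx$, yielding
\[
\frac{d}{dt}\left(\inttr \frac{|v|^2}{2}f^{n,\e}\,dvdx+\intt\frac{|w^{n,\e}|^2}{2}\,dx+\frac{|u_c^{n,\e}|^2}{2}\right)+\mu\intt|Dw^{n,\e}|^p\,dx+\inttr|\theta_\e\star w^{n,\e}+u_c^{n,\e}-v|^2f^{n,\e}\,dvdx=0.
\]
Integrating in time and using $\|w_0^{n,\e}\|_{L^2}\le\|\theta_\e\star w_0\|_{L^2}\le\|w_0\|_{L^2}$ and the initial data identifications gives \eqref{unif_est_soln}. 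I expect the main obstacle to be the rigorous justification of the cross-term cancellation: one must check that the weak formulation \eqref{reg_eq_k} admits the unbounded test function $\tfrac12|v|^2$, which requires an approximation argument with cutoffs $|v|^2\chi_R(v)$ together with the moment bounds of Lemma \ref{dec_bdd} to pass $R\to\infty$, and one must carefully track the $\theta_\e\star(\cdot)$ placements so that every coupling term in the kinetic, fluid, and $u_c$ balances matches exactly — a bookkeeping step where the symmetry $\int\theta_\e\star g\cdot h=\int g\cdot\theta_\e\star h$ is used repeatedly.
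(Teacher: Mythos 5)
Your proposal follows the paper's proof exactly: Schauder's fixed point theorem for existence, the transport identity $\nabla_v\cdot(\theta_\e\star w^{n,\e}+u_c^{n,\e}-v)=-3$ for the $L^r$ bound, and the three-part energy identity (kinetic equation tested with $|v|^2/2$, Galerkin equation tested with $w^{n,\e}$ using the self-adjointness of $\theta_\e\star$ and $\intt w^{n,\e}\,dx=0$ to drop the mean-zero forcing piece, and the ODE for $u_c^{n,\e}$ multiplied by $u_c^{n,\e}$), with the cross terms recombining into $-\inttr|\theta_\e\star w^{n,\e}+u_c^{n,\e}-v|^2 f^{n,\e}\,dvdx$. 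The paper's proof is considerably terser — it records only the identity for $\intt F^n\cdot w^{n,\e}\,dx$ and the evolution of $|u_c^{n,\e}|^2/2$ — but your bookkeeping reconstructs the same calculation correctly.
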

\begin{proof}
We first construct a solution on a sufficiently small time interval. From Lemma \ref{dec_bdd} and Proposition \ref{PA.1}, we first observe that,
for fixed $n$ and $\e$, the image of $\mathcal T$ satisfies
\[
\|\rho_{f^{n,\e}}\|_{L^{4p'}(0,T;L^2(\T^3))} + \|m_{f^{n,\e}}\|_{L^{2p'}(0,T;L^2(\T^3))} + \|w^{n,\e}\|_{L^{4p'}(0,T;L^2(\T^3))} \le C_0 T^\alpha + C_0 T^\alpha C_T,
\]
for some $\alpha>0$, where $C_0$ depends on the regularized initial data,
$n$, and $\e$, and $C_T$ remains bounded as $T\to0$. Hence, by choosing
$R>0$ sufficiently large and then choosing $T_*>0$ sufficiently small, we have
\[
\|(\bar \rho,\bar m,\bar w)\|_E\le \frac R2 \quad\Longrightarrow\quad \|\mathcal T(\bar \rho,\bar m,\bar w)\|_E\le \frac R2.
\]
Let
\[
\mathcal B_R:=\lt\{(\bar \rho,\bar m,\bar w)\in E: \|(\bar \rho,\bar m,\bar w)\|_E\le \frac R2\rt\}.
\]
Then $\mathcal B_R$ is a nonempty, closed, bounded, and convex subset of $E$, and $\mathcal T$ maps $\mathcal B_R$ into itself.

Moreover, the stability estimates for the decoupled kinetic equation and the finite-dimensional Galerkin system imply that $\mathcal T$ is continuous on $\mathcal B_R$. Together with the compactness of $\mathcal T$ proved above, Schauder's fixed point theorem yields a fixed point of $\mathcal T$ in $\mathcal B_R$. This fixed point gives a solution $(f^{n,\e},w^{n,\e})$ to \eqref{reg_eq_k}--\eqref{reg_init} on $[0,T_*)$.

The $L^r$ estimate for $f^{n,\e}$ easily follows from the standard estimate. The energy inequality is obtained by testing the kinetic and fluid equations with $|v|^2$ and $w^{n,\e}$, respectively, and using the definition of the forcing term:
\[
\intt  F^n \cdot w^{n,\e} \,dx = -\inttr (\theta_\e \star w^{n,\e} +u_c^{n,\e}-v)\cdot (\theta_\e \star  w^{n,\e})f^{n,\e}\,dvdx
\]
In addition, the evolution of the average velocity component satisfies
\[
\frac{|u_c^{n,\e}(t)|^2}{2} = \frac{|u_c^{n,\e}(0)|^2}{2} - \int_0^t \inttr (\theta_\e \star w^{n,\e} + u_c^{n,\e} - v)\cdot u_c^{n,\e} f^{n,\e} \,dxdvds
\]
which accounts for the corresponding term in the total energy balance. Combining all the estimates yields the energy estimates up to $t=T_*$. 

It remains to extend the solution to an arbitrary finite time interval $[0,T)$. The energy inequality obtained above gives bounds which depend only on the initial energy and the fixed parameters $n$ and $\e$, and hence are independent of the number of iterations. Thus, the length of the local existence interval can be chosen uniformly along the iteration. As in \cite{BDGM09}, we restart the local construction at $T_*-\delta$, with $\delta>0$ sufficiently small, and extend the solution to $[0,2T_*-\delta)$. Repeating this overlapping continuation procedure finitely many times yields a solution on the whole interval $[0,T)$. 
\end{proof}

%
%
%
%
%
\subsection{Proof of Theorem \ref{thm_main}} In this section, we establish the global-in-time existence of weak solutions to the system \eqref{main_eq}--\eqref{init_data}. Our strategy is to construct approximate solutions and perform two successive limiting procedures. We first pass to the limit $n \to \infty$ in the Galerkin approximation of a regularized system for fixed $\e > 0$, and then send $\e \to 0$ to recover a weak solution to the original system.
%
%
%
%
%
\subsubsection{The limit $n \to \infty$ for fixed $\e$}

Now our aim is to pass to the limit $n \to \infty$ for fixed $\e > 0$ in the regularized system \eqref{reg_eq_k}--\eqref{reg_init},  thereby establishing the existence of weak solution $(f^\e, w^\e)$ satisfying
\bq\label{reg_eq_e_k}
\int_0^T \!\!\inttr f^{\e}(\pa_t \phi + v \cdot \nabla_x \phi + (\theta_\e\star w^{\e}+u_c^{\e}-v )\cdot \nabla_v  \phi)\,dvdxdt = \inttr f^\e_0 \phi(0,\cdot,\cdot)\,dxdv,
\eq
holds for all $\phi \in \mathcal{C}_c^\infty([0,T)\times \T^3 \times \R^3)$ and 
\begin{align*}
&\int_0^T \intt \left(-w^\e \cdot \pa_t \psi + ((\theta_\e \star w^{\e}) \cdot \nabla_x) w^\e \cdot \psi
+ \tau(Dw^\e): D \psi + ( u_c^\e \cdot \nabla_x ) w^\e \cdot  \psi \right)\,dxds \\
&\quad =  \int_0^T \intt \bigg[ -\theta_\e \star \lt(\int_{\R^3} (\theta_\e \star w^\e +u_c^\e-v)f^\e\,dv\rt) + \inttr(\theta_\e\star w^\e+u_c^\e-v)f^\e\,dxdv\bigg]\cdot \psi\,dxds  \cr
&\qquad +  \intt u_0 \cdot \psi(0,\cdot)\,dx.
\end{align*} 
holds for all divergence-free $\psi \in C_c^\infty([0,T)\times\T^3)$, with regularized initial data
\bq\label{reg_init_e}
\begin{aligned}
&f^\e(0,x,v)= f_0^\e(x,v) := f_0(x,v)\mathds{1}_\e (v), \quad (x,v) \in \T^3 \times \R^3 \quad \mbox{and} \\
&w^\e(0,x)= (\theta_\e \star w_0) = (\theta_\e\star u_0) - \intt u_0\,dx,  \quad x \in \T^3,
\end{aligned}
\eq
and where $u_c^\e$ is defined as
\[\begin{aligned}
u_c^\e(t)&:= \lt(\intt u_0\,dx\rt)e^{-\int_0^t \intt \rho_{f^\e}\,dxds }  - \int_0^t e^{-\int_s^t \intt \rho_{f^\e}\,dxdr }\intt (\rho_{f^\e} (\theta_\e\star  w^\e) - m_{f^\e})\,dxds. \quad 
\end{aligned}\]

\paragraph{\it Step 1: Compactness and convergence of approximate solutions}

From Lemma \ref{unif_eps_est}, we obtain uniform-in-$n$ bounds for the sequence of solutions $(f^{n,\e}, w^{n,\e})$, which allows us to extract a subsequence (not relabeled) such that
\[
\begin{aligned}
&f^{n,\e} \stackrel{*}{\rightharpoonup} f^\e \quad \text{in } L^\infty(0,T;L^r(\T^3 \times \R^3)) \quad \text{for all } r \in [1,\infty],\\
&\rho_{f^{n,\e}} := \int_{\R^3} f^{n,\e}\,dv \stackrel{*}{\rightharpoonup} \rho_{f^\e} \quad \text{in } L^\infty(0,T;L^q(\T^3)) \quad \text{for all } q \in (1,5/3),\\
&m_{f^{n,\e}} := \int_{\R^3} v f^{n,\e}\,dv \stackrel{*}{\rightharpoonup} m_{f^\e} \quad \text{in } L^\infty(0,T;L^q(\T^3)) \quad \text{for all } q \in (1,5/4),\\
&w^{n,\e} \stackrel{*}{\rightharpoonup} w^\e \quad \text{in } L^\infty(0,T;L^2(\T^3)) \cap L^p(0,T;V^{1,p}).
\end{aligned}
\]

These weak-* convergences follow from the uniform bounds and Proposition \ref{PA.1}, which yields
\[
\begin{aligned}
&\sup_{0 \le t \le T} \|\rho_{f^{n,\e}}(t)\|_{L^q(\T^3)} \le C, \quad \forall q \in [1,5/3), \\
&\sup_{0 \le t \le T} \|m_{f^{n,\e}}(t)\|_{L^q(\T^3)} \le C, \quad \forall q \in [1,5/4),
\end{aligned}
\]
with $C$ independent of $n$.

Furthermore, the uniform moment bounds from Lemma \ref{unif_eps_est} and the velocity averaging result (Lemma \ref{vel_av} with $\beta = 2$) yield strong convergence of the macroscopic quantities:
\[\begin{aligned}
&\rho_{f^{n,\e}} \to \rho_{f^\e} \quad \text{in } L^{q_1}((0,T)\times\T^3), \quad \forall q_1 \in \lt(1,\frac53\rt),\\ 
&m_{f^{n,\e}} \to m_{f^\e} \quad \text{in } L^{q_2}((0,T)\times\T^3), \quad \forall q_2 \in \lt(1,\frac54\rt).
\end{aligned}\]

To obtain strong convergence of $w^{n,\e}$, we invoke the Aubin--Lions lemma. Specifically, we show the bound for $\pa_t w^{n,\e}$ in $\mathcal{Y}^*$ which is uniform both in $n$ and $\e$ and recall that $\mathcal{Y} := L^p(0,T; V^{1,p}) \cap L^{s'}((0,T)\times \T^3)$ with $s = \frac{5p}{8}$.  Note that this bound will also be used for the convergence of the stress tensor. For this, we first verify that $w^{n,\e}$ belongs to $L^{p'}(0,T;L^{\frac{5}{2} + \gamma}(\T^3))$ for some small $\gamma > 0$. Indeed, by interpolation, we have
\[
\|w^{n,\e}\|_{L^{\frac{5}{2}+\gamma}(\T^3)} \le \|\nabla_x w^{n,\e}\|_{L^p(\T^3)}^{\alpha} \|w^{n,\e}\|_{L^2(\T^3)}^{1-\alpha},
\]
with
\[
\alpha := \frac{\frac{1 + 2\gamma}{10 + 4\gamma}}{\frac{5}{6} - \frac{1}{p}},
\]
and it is shown that when $p > \frac{8}{5}$, one can choose $\gamma > 0$ such that
\bq\label{p_cond_coup}
\frac{\frac{1 + 2\gamma}{10 + 4\gamma}}{\frac{5}{6} - \frac{1}{p}} \cdot \frac{p}{p - 1} < p,
\eq
ensuring that $w^{n,\e} \in L^{p'}(0,T;L^{\frac{5}{2} + \gamma}(\T^3))$. Note that $\frac{\frac{1}{10}}{\frac 56 -\frac1p} \cdot \frac{p}{p-1} < p$ if and only if $p>\frac{29+\sqrt{91}}{25} =1.541...$ or $p< \frac{29 -\sqrt{91}}{25}$.

This integrability, together with the uniform both in $n$ and $\e$ bounds $w^{n,\e} \in L^{\frac{5p}{3}}((0,T)\times\T^3)$ and $(\theta_\e \star w^{n,\e})\cdot \nabla_x w^{n,\e} \in L^s((0,T)\times \T^3)$ shown in \eqref{w_est_sp}, is then used to obtain uniform-in-$n$ bounds for $\partial_t w^{n,\e}$ in the dual space $\mathcal{Y}^*$ by testing the equation against smooth functions $\phi \in \mathcal{Y}=L^p(0,T;V^{1,p})\cap L^{s'}((0,T)\times\T^3)$. Each nonlinear term in the fluid equation is estimated by H\"older's inequality and known bounds on $f^{n,\e}$, $w^{n,\e}$, and $u_c^{n,\e}$.  Indeed, we estimate
\begin{align*}
&\lt|\int_0^T \intt \pa_t w^{n,\e} \cdot \phi \,dxdt \rt| \\
&\quad \le \| (\theta_\e \star w^{n,\e}) \cdot \nabla_x w^{n,\e}\|_{L^s((0,T)\times \T^3)}\|\phi\|_{L^{s'}((0,T)\times \T^3)}   + C\int_0^T \|\nabla_x \phi(t)\|_{L^p(\T^3)}\|(1 + |Dw^{n,\e}|)\|_{L^p(\T^3)}^{p-1}\,dt \\
&\qquad +  \|(u_c^{n,\e} \cdot \nabla_x) w^{n,\e}\|_{L^{s}((0,T)\times \T^3)}\| \phi\|_{L^{s'}((0,T)\times\T^3))}   + \lt\| [\rho_{f^{n,\e}} (\theta_\e \star  w^{n,\e}+u_c^{n,\e})-m_{f^{n,\e}}] \cdot \phi \rt\|_{L^1((0,T)\times\T^3)} \\
&\qquad+ \|\rho_{f^{n,\e}} (\theta_\e \star  w^{n,\e}+u_c^{n,\e})-m_{f^{n,\e}}]\|_{L^{p'}(0,T;L^1(\T^3))}\|\phi\|_{L^p(0,T;L^1(\T^3))}  \\
&\quad \le C(1+\|w^{n,\e}\|_{L^\infty(0,T;L^2 (\T^3))}^2 +\|\nabla_x w^{n,\e}\|_{L^p((0,T)\times\T^3)}^2)\|\phi\|_{L^s((0,T)\times \T^3)}\\
&\qquad + C\|\phi\|_{L^p(0,T;V^{1,p})}( 1+ \|\nabla_x w^{n,\e}\|_{L^p((0,T)\times \T^3)}^{p-1})\\
&\qquad + C\Big(\|\rho_{f^{n,\e}} (\theta_\e \star w^{n,\e})\|_{L^{p'}(0,T;L^1(\T^3))} + \|\rho_{f^{n,\e}}\|_{L^{p'}(0,T;L^1(\T^3))} + \|m_{f^{n,\e}}\|_{L^{p'}(0,T;L^1(\T^3))}\Big)\|\phi\|_{L^p(0,T; V^{1,p})}\\
&\qquad + \lt\| [\rho_{f^{n,\e}} (\theta_\e \star  w^{n,\e}+u_c^{n,\e})-m_{f^{n,\e}}] \cdot \phi \rt\|_{L^1((0,T)\times\T^3)}\\
&\quad \le C\lt(1 + \|w^{n,\e}\|_{L^\infty(0,T;L^2(\T^3))}^2 + \|\nabla_x w^{n,\e}\|_{L^p((0,T)\times\T^3)}^p\rt)\|\phi\|_{\mathcal{Y}}\\
&\qquad + C\lt(\|\rho_{f^{n,\e}}\|_{L^\infty(0,T;L^{\frac 53 - \delta}(\T^3))}(\|w^{n,\e}\|_{L^{p'}(0,T;L^{\frac 52 +\gamma}(\T^3))}+\|u_c^{n,\e}\|_{L^\infty(0,T)}) + \|m_{f^{n,\e}}\|_{L^\infty(0,T;L^1(\T^3))} \rt)\|\phi\|_{\mathcal{Y}}\\
&\qquad +\lt\| [\rho_{f^{n,\e}} (\theta_\e \star  w^{n,\e}+u_c^{n,\e})-m_{f^{n,\e}}] \cdot \phi \rt\|_{L^1((0,T)\times\T^3)},
\end{align*}
where we used 
$p>\frac 85 >\frac 32$ to have the existence of sufficiently small $\gamma>0$ and $\delta>0$ satisfying \eqref{p_cond_coup} and
\bq\label{e_del_choice}
 \frac{1}{\frac 53 - \delta} + \frac{1}{\frac 52 +\gamma}=1,
\eq
and also the following bound:
\[\begin{aligned}
|u_c^{n,\e}(t)|&\le |u_c^{n,\e}(0)| +  \int_0^t \intt \rho_{f^{n,\e}} |\theta_\e\star  w^{n,\e}| + | m_{f_n}|)\,dxds\\
&\le |u_c^{n,\e}(0)| + \int_0^t (\|\rho_{f^{n,\e}}(s)\|_{L^{\frac 53 -\delta}}\|w^{n,\e}(s)\|_{L^{\frac 52 +\gamma}} + \|m_{f^{n,\e}}(s)\|_{L^1})\,ds\\
&\le C.
\end{aligned}\]

It now remains to estimate the last term 
\[
\lt\| [\rho_{f^{n,\e}} (\theta_\e \star  w^{n,\e}+u_c^{n,\e})-m_{f^{n,\e}}] \cdot \phi \rt\|_{L^1((0,T)\times\T^3)},
\] 
and we split the proof of this bound into two cases:\\

\noindent $\bullet$ (Case A: $\frac 85 < p<2$) In this case, we aim to show 
\[
\rho_{f^{n,\e}} (\theta_\e \star  w^{n,\e}+u_c^{n,\e})-m_{f^{n,\e}} \in L^s((0,T)\times\T^3),
\]
where $s = \frac{5p}{8} < \frac 54$. Clearly, $\rho_{f^{n,\e}} u_c^{n,\e}$, $m_{f^{n,\e}} \in L^s((0,T)\times\T^3)$. Moreover,

\[\begin{aligned}
\|\rho_{f^{n,\e}} (\theta_\e \star  w^{n,\e})\|_{L^{\frac{5p}{8}}((0,T)\times\T^3)} &\le \|\rho_{f^{n,\e}}\|_{L^\infty(0,T;L^{\frac53 -\delta'}(\T^3))}\|w^{n,\e}\|_{L^{\frac{5p}{8}}(0,T;L^{\frac{5p}{8-3p} +\gamma'}(\T^3)) }\\
&\le \|\rho_{f^{n,\e}}\|_{L^\infty(0,T;L^{\frac53 -\delta'}(\T^3))}\|\nabla w^{n,\e}\|_{L^{\frac{5p}{8}\alpha}(0,T;L^p(\T^3)) }^\alpha \|w^{n,\e}\|_{L^\infty(0,T;L^2(\T^3))}^{1-\alpha}, 
\end{aligned}\]
where $\delta'>0$ and $\gamma'>0$ are sufficiently small constants satisfying
\[
\frac{1}{\frac53-\delta'} + \frac{1}{\frac{5p}{8-3p}+\gamma'} = \frac{8}{5p},
\]
and
\[
\alpha :=\frac{\frac{\frac{11p-16}{8-3p}+\gamma'}{\frac{10p}{8-3p}+2\gamma' }}{\frac56 -\frac1p} \in (0,1).
\]
Since $\frac{5p}{8}\cdot \alpha <p$ for small enough $\gamma'$,  this implies 
\[
\rho_{f^{n,\e}} (\theta_\e \star  w^{n,\e}) \in L^s((0,T)\times\T^3) 
\]
uniformly both in $n$ and $\e$, and thus
\[
\left| \int_0^T \int_{\T^3} \partial_t w^{n,\e} \cdot \phi\,dxdt \right| \le C \|\phi\|_{\mathcal{Y}},
\]
where $C$ is independent of $n$ and $\e$.\\

\noindent $\bullet$ (Case B: $p>2$) Note that $p>2$ gives
\[
L^p(0,T;V^{1,p}) \subseteq L^2(0,T;L^6(\T^3))
\]
Thus, we show 
\[
\rho_{f^{n,\e}} (\theta_\e \star  w^{n,\e}+u_c^{n,\e})-m_{f^{n,\e}} \in L^2(0,T;L^{\frac 65}(\T^3)),
\]
which gives
\[\begin{aligned}
&\lt\| [\rho_{f^{n,\e}} (\theta_\e \star  w^{n,\e}+u_c^{n,\e})-m_{f^{n,\e}}] \cdot \phi \rt\|_{L^1((0,T)\times\T^3)} \cr
 &\quad \le \|\rho_{f^{n,\e}} (\theta_\e \star  w^{n,\e}+u_c^{n,\e})-m_{f^{n,\e}}\|_{L^2(0,T;L^{\frac65 }(\T^3))}\|\phi\|_{L^2(0,T;L^6(\T^3))}\\
&\quad \le  \|\rho_{f^{n,\e}} (\theta_\e \star  w^{n,\e}+u_c^{n,\e})-m_{f^{n,\e}}\|_{L^2(0,T;L^{\frac65 }(\T^3))}\|\phi\|_{L^p(0,T;V^{1,p})}.
\end{aligned}\]
First, one readily obtains  $\rho_{f^{n,\e}} u_c^{n,\e}$, $m_{f^{n,\e}} \in L^2(0,T;L^{\frac65}(\T^3))$. Moreover,
\[\begin{aligned}
\|\rho_{f^{n,\e}} (\theta_\e \star w^{n,\e})\|_{L^2(0,T;L^{\frac65 }(\T^3))} \le \|\rho_{f^{n,\e}}\|_{L^\infty(0,T;L^{\frac32}(\T^3))}\|w^{n,\e}\|_{L^2(0,T;L^6(\T^3))},
\end{aligned}\]
which also yields
\[
\left| \int_0^T \int_{\T^3} \partial_t w^{n,\e} \cdot \phi\,dxdt \right| \le C \|\phi\|_{\mathcal{Y}}.
\]

In either case, we obtain
\[
\partial_t w^{n,\e} \text{ is bounded in } \mathcal{Y}^*,
\]
and hence
\[\label{wne_lrlq}
w^{n,\e} \to w^\e \quad \text{strongly in } L^p(0,T;L^2(\T^3)),
\]
which further implies
\begin{equation}\label{w_str_range}
w^{n,\e} \to w^\e \quad \text{strongly in } L^r(0,T;L^q(\T^3)),
\end{equation}
for all admissible pairs $(r,q)$ satisfying
\[
q \in \left(1, \frac{3p}{3-p}\right), \quad r \in \left(1, \frac{5p - 6}{3 - \frac{6}{q}}\right).
\]

In particular, we obtain almost everywhere convergence of $w^{n,\e} \to w^\e$, and the convolution $\theta_\e \star w^{n,\e}$ also converges strongly. Together with the strong convergence of $(\rho_{f^{n,\e}}, m_{f^{n,\e}}, u_c^{n,\e})$ and weak-* convergence of $f^{n,\e}$, this would allow us to pass to the limit in the weak formulation and identify $(f^\e, w^\e)$ as a weak solution to the regularized system \eqref{reg_eq_e_k}--\eqref{reg_init_e}. 

\medskip

\paragraph{\it Step 2: Identification of the limit system}

We now verify that the limiting pair $(f^\e, w^\e)$ indeed satisfies the regularized system \eqref{reg_eq_e_k}--\eqref{reg_init_e} in the sense of distributions. The weak convergence results established in the previous step ensure that all terms in the weak formulation converge appropriately, provided that the nonlinear terms admit suitable continuity or compactness.

Specifically, for any test functions $\phi \in \mathcal{C}_c^\infty([0,T)\times\T^3\times\R^3)$ and $\psi \in \mathcal{C}_c^\infty([0,T)\times\T^3)$ with $\nabla_x \cdot \psi = 0$, we need to show:
\begin{itemize}
\item[(i)] \quad $\displaystyle \int_0^T \iint_{\T^3 \times \R^3} (\theta_\e \star w^{n,\e}) \cdot \nabla_v \phi \, f^{n,\e}\,dvdxdt \to \int_0^T \iint_{\T^3 \times \R^3} (\theta_\e \star w^\e) \cdot \nabla_v \phi \, f^\e \,dvdxdt,$ \\[1mm]
\item[(ii)] \quad $\displaystyle \int_0^T \int_{\T^3} \theta_\e \star \left( \int_{\R^3} (\theta_\e \star w^{n,\e} + u_c^{n,\e} - v) f^{n,\e} \,dv \right) \cdot \psi \,dxds \\
 \mbox{ } \hspace{3cm} \to \int_0^T \int_{\T^3} \theta_\e \star \left( \int_{\R^3} (\theta_\e \star w^\e + u_c^\e - v) f^\e \,dv \right) \cdot \psi \,dxds$, \\[1mm]
\item[(iii)] \quad $\displaystyle \int_0^T \int_{\T^3} \tau(D w^{n,\e}) : D\psi \,dxds \to \int_0^T \int_{\T^3} \tau(D w^\e) : D\psi \,dxds$.
\end{itemize}

\medskip

\noindent (i) The convergence in the kinetic term follows from the strong convergence of $w^{n,\e} \to w^\e$ in $L^p(0,T;L^2(\T^3))$ and the weak-* convergence of $f^{n,\e}$. Indeed, we write
\[
\begin{aligned}
&\left|\int_0^T \iint_{\T^3 \times \R^3} \left[(\theta_\e \star w^{n,\e}) f^{n,\e} - (\theta_\e \star w^\e) f^\e \right] \cdot \nabla_v \phi \,dvdxdt \right| \\
&\quad \le \int_0^T \iint_{\T^3 \times \R^3} |\theta_\e \star (w^{n,\e} - w^\e)| \, f^{n,\e} \, |\nabla_v \phi|\,dvdxdt + \left|\int_0^T \iint_{\T^3 \times \R^3} (\theta_\e \star w^\e) \cdot \nabla_v \phi (f^{n,\e} - f^\e) \,dvdxdt \right|\cr
&\quad \le C\|\theta_\e \star(w^{n,\e} -w^\e)\|_{L^p(0,T;L^2(\T^3))}|\mbox{supp}_v \phi| \|\nabla_v \phi\|_{L^\infty((0,T)\times\T^3\times\R^3)}\|f^{n,\e}\|_{L^\infty((0,T)\times\T^3\times\R^3)}\\
&\qquad + \lt|\int_0^T \inttr \theta_\e \star w^\e \cdot \nabla_v \phi (f^{n,\e} -f^\e)\,dvdxdt \rt|.
\end{aligned}
\]
The first term vanishes due to strong convergence of $w^{n,\e}$ in $L^p(0,T;L^2(\T^3))$ and boundedness of $f^{n,\e}$ in $L^\infty$, while the second term vanishes due to the weak-* convergence of $f^{n,\e}$. Hence, the desired convergence holds.

\medskip

\noindent{(ii)} For the nonlinear drag term, we first observe that 
\begin{equation}\label{ucLinfty}u_c^{n,\e} \to u_c^\e \mbox{ strongly in }L^\infty(0,T),
\end{equation} 
which follows from the strong convergence of $\rho_{f^{n,\e}}$, $m_{f^{n,\e}}$, and $w^{n,\e}$, and continuity of the exponential and integral operators defining $u_c^{n,\e}$. Indeed, we deduce
\begin{align*}
&|u_c^{n,\e}(t) - u_c^\e(t)| \cr
&\quad \le C\|u_0\|_{L^1(\T^3)} \int_0^t \intt |\rho_{f^{n,\e}} - \rho_{f^\e}|\,dxds\\
&\qquad +  C\int_0^t \|\rho_{f^{n,\e}}(s)(\theta_\e \star w^{n,\e}(s)) - m_{f^{n,\e}}(s)\|_{L^1}  \int_s^t \intt |(\rho_{f^{n,\e}} - \rho_{f^\e})(r)|\,dxdrds\\
&\qquad + C\int_0^t \intt|(\rho_{f^{n,\e}} - \rho_{f^\e})(s)| |\theta_\e  \star w^{n,\e}(s)|\,dxds\\
&\qquad + C\int_0^t \intt \lt(\rho_{f^\e} (s) |\theta_\e \star (w^{n,\e} - w^\e)(s)| + |(m_{f^{n,\e}} - m_{f^\e})(s)| \rt)\,dxds\\
&\quad \le C \|u_0\|_{L^1(\T^3)}\|\rho_{f^{n,\e}} - \rho_{f^\e}\|_{L^{\frac 54}((0,T)\times\T^3)}\\
&\qquad + C\lt(\|\rho_{f^{n,\e}}\|_{L^\infty(0,T;L^{\frac 53 -\delta}(\T^3)}\|w^{n,\e}\|_{L^{p'}(0,T;L^{\frac 52 +\gamma}(\T^3))} + \|m_{f^{n,\e}}\|_{L^\infty(0,T;L^1(\T^3)}\rt)\|\rho_{f^{n,\e}} - \rho_{f^\e}\|_{L^{\frac 54}((0,T)\times\T^3)}\\
&\qquad+ C\|\rho_{f^{n,\e}} - \rho_{f^\e}\|_{L^{\frac 53-\delta}((0,T)\times\T^3)} \|w^{n,\e}\|_{L^{\frac 52 +\gamma}((0,T)\times\T^3)}\\
&\qquad + C\|\rho_{f^\e} \|_{L^\infty(0,T;L^{\frac 53-\delta}(\T^3))} \|w^{n,\e} - w^\e\|_{L^2(0,T;L^{\frac 52+\gamma}(\T^3))} + \|m_{f^{n,\e}} - m_{f^\e}\|_{L^{\frac 65}((0,T)\times\T^3)},
\end{align*}
where we used aforementioned strong convergences of $(\rho_{f^{n,\e}}, m_{f^{n,\e}}, w^{n,\e})$, and chose sufficiently small $\gamma>0$ and $\delta>0$ satisfying
\bq\label{e_del_choice2}
\frac{\frac{1 + 2\gamma}{10 + 4\gamma}}{\frac{5}{6} - \frac{1}{p}} \cdot \lt(\frac 52+\gamma\rt) < p,\quad \frac{1}{\frac 53 - \delta} + \frac{1}{\frac 52 +\gamma}=1.
\eq
Note that  $p>\frac85 > \frac32$ gives $\frac{\frac{1}{10}}{\frac56 - \frac1p} \cdot \frac52 <p$, which guarantees the existence of $\gamma$ satisfying the first relation in \eqref{e_del_choice2}. This in turn implies $w^{n,\e} \in L^{\frac52+\gamma}((0,T)\times\T^3)$ uniformly in $n$ and $\e$. Moreover, $w^{n,\e}$ converges to $w^\e$ strongly in $L^2(0,T;L^{\frac 52+\gamma}(\T^3))$ according to \eqref{w_str_range}. Hence, we obtain
\begin{align*}
\Bigg|\int_0^T&\intt \theta_\e \star\lt[\intr (\theta_\e \star w^{n,\e} + u_c^{n,\e}-v) f^{n,\e}\,dv - \intr (\theta_\e \star w^\e + u_c^\e-v) f^\e\,dv\rt] \cdot \psi \,dxds\Bigg|\\
&\le \int_0^T \intt \lt| \lt[(\rho_{f^{n,\e}} (\theta_\e \star w^{n,\e} + u_c^{n,\e}) -m_{f^{n,\e}}) -   (\rho_{f^\e} ( \theta_\e \star w^\e + u_c^\e) -m_{f^\e})\rt]\cdot (\theta_\e \star\psi)\rt|\,dxds\\
&\le \int_0^T \intt \lt|(\rho_{f^{n,\e}} - \rho_{f^\e}) (\theta_\e \star w^{n,\e} + u_c^{n,\e}) \cdot \theta_\e\star \psi\rt|\,dxds\\
&\quad + \int_0^T \intt \lt| \rho_{f^\e} (\theta_\e\star(w^{n,\e} - w^\e) + (u_c^{n,\e}- u_c^\e) )\cdot (\theta_\e\star\psi) \rt|\,dxds + \int_0^t \intt \lt|(m_{f^{n,\e}} - m_{f^\e} )\cdot \theta_\e\star\psi  \rt|\,dxds\\
&\le \|\rho_{f^{n,\e}} - \rho_{f^\e}\|_{L^{\frac 53-\delta}((0,T)\times\T^3)}(\|w^{n,\e}\|_{L^{\frac52 +\gamma}((0,T)\times\T^3)} + \|u_c^{n,\e}\|_{L^\infty(0,T)})\|\theta_\e\star \psi\|_{L^\infty((0,T)\times\T^3)}\\
&\quad+ C\|\rho_{f^\e}\|_{L^\infty(0,T;L^{\frac 53- \delta}(\T^3))}\|w^{n,\e} - w^\e\|_{L^2(0,T;L^{\frac52 + \gamma}(\T^3))}\|\theta_\e\star\psi\|_{L^\infty((0,T)\times\T^3)}\\
&\quad +C\|\rho_{f^\e}\|_{L^\infty(0,T;L^1(\T^3))}\|u_c^{n,\e} - u_c^\e\|_{L^\infty(0,T)}\|\theta_\e\star\psi\|_{L^\infty((0,T)\times\T^3)}\\
&\quad + C\|m_{f^{n,\e}} -m_{f^\e}\|_{L^{\frac 65}((0,T)\times \T^3)}\|\theta_\e\star\psi\|_{L^\infty((0,T)\times\T^3)}\\
&\longrightarrow 0 \quad \mbox{as} \quad n \to \infty .
\end{align*}


\noindent{(iii)}
 Finally, to handle the nonlinear stress tensor, we observe that by Lemma~\ref{unif_eps_est} 
 $\tau(Dw^{n,\e})$ is uniformly bounded in $L^{p'}((0,T) \times \T^3)$, hence
 \begin{equation}\label{tdun_weak}
 \tau(Dw^{n,\e}) {\rightharpoonup}  \chi \quad \mbox{ weakly in }L^{p'}((0,T) \times \T^3).
 \end{equation}
Since our nonlinear viscous term is strictly monotone, we can apply similar arguments as in  \cite{FMS97, FMS00} to obtain 
    \begin{equation}\label{Dwne_ae}	
    Dw^{n,\e} \to Dw^\e \quad \mbox{a.e. in }(0,T) \times \T^3. 
    \end{equation}
We provide a detailed proof of the above in Appendix \ref{app.A} for the readers' convenience. Let us note that at this point we still keep the regularization in the convective term, and thus the arguments may not be entirely optimal. Combining \eqref{tdun_weak} and \eqref{Dwne_ae}, together with continuity of $\tau^\e (\cdot)$, we obtain
\[
\int_0^T \int_{\T^3} \tau(D w^{n,\e}) : D\psi \,dxds \to \int_0^T \int_{\T^3} \tau(D w^\e) : D\psi \,dxds.
\]

We conclude that the pair $(f^\e, w^\e)$ is a weak solution to the regularized system \eqref{reg_eq_e_k}--\eqref{reg_init_e}.

\medskip

\paragraph{\it Step 3: Energy inequality for the limit system}

It remains to verify that the limiting pair $(f^\e, w^\e)$ satisfies the energy inequality corresponding to the regularized system \eqref{reg_eq_e_k}--\eqref{reg_init_e}.

From Lemma \ref{unif_eps_est}, we already have the uniform-in-$n$ energy estimate for the approximate solution $(f^{n,\e}, w^{n,\e})$ as in \eqref{unif_est_soln}. We now pass to the limit $n \to \infty$ in this inequality. The weak-* convergence of $f^{n,\e}$ in $L^\infty(0,T;L^1(\T^3 \times \R^3))$ yields
\[
\liminf_{n \to \infty} \iint_{\T^3 \times \R^3} \frac{|v|^2}{2} f^{n,\e}(t)\,dvdx \ge \iint_{\T^3 \times \R^3} \frac{|v|^2}{2} f^\e(t)\,dvdx.
\]
 Similarly, since $\{ w^{n,\e} \}_n$ is uniformly bounded in $L^\infty(0,T;L^2(\T^3))$, and  $\{ D w^{n,\e} \}_n$ is uniformly bounded in $L^p((0,T)\times \T^3)$,
by weak convergence and convexity of $|\cdot|^2$ and $|\cdot|^p$, we obtain
\[
\begin{aligned}
&\liminf_{n \to \infty} \int_{\T^3} \frac{|w^{n,\e}(t)|^2}{2}\,dx \ge \int_{\T^3} \frac{|w^\e(t)|^2}{2}\,dx, \\
&\liminf_{n \to \infty} \int_0^T \int_{\T^3} \mu |D w^{n,\e}|^p\,dxdt \ge \int_0^T \int_{\T^3} \mu |D w^\e|^p\,dxdt.
\end{aligned}
\]

The convergence \eqref{ucLinfty} implies
\[
\lim_{n \to \infty} |u_c^{n,\e}(t)|^2 = |u_c^\e(t)|^2,
\]
and we also deduce
\[
\liminf_{n \to \infty} \int_0^T \iint_{\T^3 \times \R^3} |\theta_\e \star w^{n,\e} + u_c^{n,\e} - v|^2 f^{n,\e}\,dvdxdt \ge \int_0^T \iint_{\T^3 \times \R^3} |\theta_\e \star w^\e + u_c^\e - v|^2 f^\e\,dvdxdt,
\]
by Fatou's lemma and the pointwise convergence of the integrand.

For the initial data, we have strong convergence:
\[
\begin{aligned}
&f_0^{n,\e} \to f_0^\e \quad \text{strongly in } L^1 \cap L^\infty(\T^3 \times \R^3),\\
&w_0^{n,\e} \to w_0^\e \quad \text{strongly in } L^2(\T^3),
\end{aligned}
\]
which implies convergence of the right-hand side of the inequality. Therefore, the limit $(f^\e, w^\e)$ satisfies the energy inequality:
\begin{align}\label{en_eq_e}
\begin{aligned}
&\sup_{t \in (0,T)} \left( \iint_{\T^3 \times \R^3} \frac{|v|^2}{2} f^\e\,dvdx + \int_{\T^3} \frac{|w^\e|^2}{2}\,dx + \frac{1}{2} |u_c^\e|^2 \right) \\
&\quad + \int_0^T \int_{\T^3} \mu|D w^\e|^p\,dxdt + \int_0^T \iint_{\T^3 \times \R^3} |\theta_\e \star w^\e + u_c^\e - v|^2 f^\e\,dvdxdt  \\
&\qquad \le \iint_{\T^3 \times \R^3} \frac{|v|^2}{2} f_0^\e\,dvdx + \int_{\T^3} \frac{|w_0^\e|^2}{2}\,dx + \frac{1}{2} |u_c^\e(0)|^2.  
\end{aligned}
\end{align}
This completes the proof that the limit $(f^\e, w^\e)$ is a weak solution to the regularized system and satisfies the associated energy inequality.

\subsubsection{The limit $\e \to 0$: passing to the original system}

We now pass to the limit $\e \to 0$ to construct a global-in-time weak solution to the original system \eqref{main_eq} in the sense of Definition \ref{def_weak}. The starting point is the approximate solution $(f^\e, w^\e)$ to the regularized system \eqref{reg_eq_e_k}--\eqref{reg_init_e}, obtained in the previous subsection.

\medskip

\paragraph{\it Step 1: Compactness and convergence of approximate solutions}

From the uniform energy bounds and a priori estimates, we extract a subsequence (still indexed by $\e$) such that
\[
\begin{aligned}
&f^\e \stackrel{*}{\rightharpoonup} f \quad \text{in } L^\infty(0,T;L^r(\T^3 \times \R^3)) \quad \forall \, r \in [1,\infty],\\
&\rho_{f^\e} \stackrel{*}{\rightharpoonup} \rho_f \quad \text{in } L^\infty(0,T;L^q(\T^3)), \quad \forall \, q \in (1,5/3),\\
&m_{f^\e} \stackrel{*}{\rightharpoonup} m_f \quad \text{in } L^\infty(0,T;L^q(\T^3)), \quad \forall \, q \in (1,5/4),\\
&w^\e \to w \quad \text{a.e. and in } L^r(0,T;L^q(\T^3)) \quad \forall \, q \in \left(1, \frac{3p}{3-p} \right), \ r \in \left(1, \frac{5p-6}{3 - \tfrac{6}{q}} \right),\\
&\nabla_x w^\e \rightharpoonup \nabla_x w \quad \text{in } L^p((0,T)\times \T^3).
\end{aligned}
\]
By velocity averaging and moment estimates, we also obtain strong convergence:
\[
\begin{aligned}
&\rho_{f^\e} \to \rho_f \quad \text{in } L^{q_1}((0,T)\times \T^3), \quad \forall \,q_1 \in \lt(1, \frac53 \rt),\\
&m_{f^\e} \to m_f \quad \text{in } L^{q_2}((0,T)\times \T^3), \quad \forall \, q_2 \in \lt(1, \frac54\rt).
\end{aligned}
\]


\paragraph{\it Step 2: Identification of the limiting system}

We now verify that the limit functions $(f, w)$ indeed satisfy the original system \eqref{main_eq} in the sense of Definition \ref{def_weak}. This follows by passing to the limit in the weak formulations satisfied by the approximate solutions $(f^\e, w^\e)$. For any test functions $\phi\in\mathcal{C}_c^\infty([0,T)\times\T^3\times\R^3)$ with $\phi(T,\cdot, \cdot)=0$ and $\psi \in \mathcal{C}([0,T]\times\T^3)$ with $\nabla_x  \cdot \psi = 0$, we show 
\begin{itemize}
\item[(i)] \quad $\displaystyle \int_0^T \inttr (\theta_\e \star w^\e \cdot \nabla_v \phi) f^\e\,dvdxdt \to \int_0^T \inttr w \cdot \nabla_v \phi f\,dvdxdt$,\\
\item[(ii)] \quad $\displaystyle \int_0^T\intt \theta_\e \star\lt(\intr (\theta_\e \star w^\e + u_c^\e-v) f^\e\,dv\rt) \cdot \psi \,dxds \to \int_0^T\inttr (w+u_c-v)\cdot \psi f\,dvdxds$,\\
\item[(iii)] \quad $\displaystyle \int_0^T \intt \tau(Dw^\e) : D\psi\,dxds \to \int_0^T \intt \tau(Dw) : D\psi\,dxds $.
\end{itemize}

\medskip

\noindent (i) We estimate
\begin{align*}
&\bigg|\int_0^T \inttr ((\theta_\e \star w^\e)  f^\e - w f)\cdot \nabla_v \phi \,dvdxdt \bigg|\\
&\quad \le\int_0^T \inttr  |\theta_\e \star (w^\e - w)| f^\e |\nabla_v \phi|\,dvdxdt + \int_0^T \inttr |(\theta_\e \star w) - w| |\nabla_v \phi| f^{n,\e}\,dvdxdt\\
&\qquad + \lt|\int_0^T \inttr w\cdot \nabla_v \phi (f^\e -f)\,dvdxdt \rt|\\
&\quad \le C\|\theta_\e \star(w^\e -w)\|_{L^{p'}(0,T;L^2(\T^3))}|\mbox{supp}_v \phi| \|\nabla_v \phi\|_{L^\infty((0,T)\times\T^3\times\R^3)}\|f^\e\|_{L^\infty((0,T)\times\T^3\times\R^3)}\\
&\qquad + C(1+\|w\|_{L^\infty(0,T;L^2(\T^3))}+\|\nabla_x w\|_{L^p((0,T)\times\T^3)})|\mbox{supp}_v \phi| \|\nabla_v \phi\|_{L^\infty((0,T)\times\T^3\times\R^3)}\\
&\qquad \quad \times\|f^\e\|_{L^\infty((0,T)\times\T^3\times\R^3)} \intt |y|^\beta \theta_\e (y)\,dy\\
&\qquad + \lt|\int_0^T \inttr w\cdot \nabla_v \phi (f^\e -f)\,dvdxdt \rt| \to 0 \quad \mbox{as} \quad \e \to 0,
\end{align*}
where we used
\bq\label{quo_est}
\begin{aligned}
\intt& ((\theta_\e \star w) - w) h(x)\,dx \\
&= \iint_{\T^3\times\T^3} |y|^\beta\theta_\e(y) \lt(\frac{w(x-y)-w(x)}{|y|^\beta} \rt) h(x)\,dydx \\
&\le \intt ||y|\theta_\e(y)| \|h|w(\cdot-y)-w(\cdot)|^{1-\beta}\|_{L^{\frac{p}{p-\beta}}(\T^3)}\lt(\intt  \lt|\frac{w(x-y)-w(x)}{|y|} \rt|^p\,dx\rt)^{\frac\beta p}\,dy\\
&\le C\|h\|_{L^q(\T^3)}\|w(t)\|_{L^{\frac{1-\beta}{1-\frac \beta p - \frac 1q}}(\T^3)}^{1-\beta}\|\nabla_x w(t)\|_{L^p(\T^3)}^\beta \intt |y|^\beta\theta_\e(y)\,dy,
\end{aligned}
\eq
and we chose $q=\infty$ and $\beta \in (0,1)$ be sufficiently small so that $\frac{p}{p-\beta} \le 2$.

\medskip

\noindent (ii) We similarly the strong convergence of $u_c^\e$ to $u_c$, i.e. 
\[
\begin{aligned}
&|u_c^\e(t) - u_c(t)| \cr
&\quad \le  C \|u_0\|_{L^1(\T^3)}\|\rho_{f^\e} - \rho_f\|_{L^{\frac 54}((0,T)\times\T^3)}\\
&\qquad + C\lt(\|\rho_{f^\e}\|_{L^\infty(0,T;L^{\frac 53 -\delta}(\T^3)}\|w^\e\|_{L^{p'}(0,T;L^{\frac 52 +\gamma}(\T^3))} + \|m_{f^\e}\|_{L^\infty(0,T;L^1(\T^3)}\rt)\|\rho_{f^\e} - \rho_f\|_{L^{\frac 54}((0,T)\times\T^3)}\\
&\qquad+ C\|\rho_{f^\e} - \rho_f\|_{L^{\frac 53-\delta}((0,T)\times\T^3)} \|w^\e\|_{L^{\frac 52 +\gamma}((0,T)\times\T^3)}\\
&\qquad + C\|\rho_f \|_{L^\infty(0,T;L^{\frac 53-\delta}(\T^3))} \|w^\e - w\|_{L^2(0,T;L^{\frac 52+\gamma}(\T^3))} + \|m_{f^\e} - m_f\|_{L^{\frac 65}((0,T)\times\T^3)},
\end{aligned}
\]
where we also used the same $\e>0$ and $\delta>0$ as \eqref{e_del_choice}  so that $w^\e$ converges to $w$ strongly in $L^2(0,T;L^{\frac 52+\gamma}(\T^3))$, and $w^\e$ is uniformly bounded in $L^{\frac 52 + \gamma}((0,T)\times\T^3)$. Then we obtain
\begin{align*}
&\Bigg|\int_0^T\intt \theta_\e \star\lt[\intr (\theta_\e \star w^\e + u_c^\e-v) f^\e\,dv\rt] \cdot \psi \,dxds -\int_0^t\inttr (w+u_c-v)\cdot \psi f\,dvdxds\Bigg|\\
&\quad \le \int_0^T \intt \lt|(\rho_{f^\e} (\theta_\e \star w^\e + u_c^\e) -m_{f^\e})\cdot (\theta_\e \star \psi) -   (\rho_f ( w + u_c) -m_f)\cdot \psi\rt|\,dxds\\
&\quad \le \int_0^T \intt \lt|(\rho_{f^\e} - \rho_f) (\theta_\e \star w^\e + u_c^\e) \cdot \theta_\e\star \psi\rt|\,dxds\\
&\qquad + \int_0^T \intt \lt| \rho_f (\theta_\e\star(w^\e - w) +((\theta_\e\star w) - w)+ (u_c^\e- u_c) )\cdot (\theta_\e\star\psi) \rt|\,dxds\\
&\qquad + \int_0^T \intt \lt|(m_{f^\e} - m_f )\cdot \theta_\e\star\psi +(\rho_f ( w +u_c) - m_f)\cdot ((\theta_\e\star\psi)-\psi)  \rt|\,dxds\\
&\quad \le \|\rho_{f^\e} - \rho_f\|_{L^{\frac 53-\delta}((0,T)\times\T^3)}(\|w^\e\|_{L^{\frac52 +\gamma}((0,T)\times\T^3)} + \|u_c^\e\|_{L^\infty(0,T)})\|\theta_\e\star \psi\|_{L^\infty((0,T)\times\T^3)}\\
&\qquad+ C\|\rho_f\|_{L^\infty(0,T;L^{\frac 53- \delta}(\T^3))}\|w^\e - w\|_{L^2(0,T;L^{\frac52 + \gamma}(\T^3))}\|\theta_\e\star\psi\|_{L^\infty((0,T)\times\T^3)}\\
&\qquad + C \|\rho_f\|_{L^\infty(0,T;L^{\frac 53-\delta'}(\T^3))}\|w\|_{L^{p'}(0,T;L^{\frac 52 +\gamma'}(\T^3))}^{1-\beta}\|\nabla_x w\|_{L^p((0,T)\times \T^3)}^\beta\|\theta_\e\star\psi\|_{L^\infty((0,T)\times\T^3)}\intt |y|^\beta \theta_\e(y)\,dy\\
&\qquad +C\|\rho_f\|_{L^\infty(0,T;L^1(\T^3))}\|u_c^\e - u_c\|_{L^\infty(0,T)}\|\theta_\e\star\psi\|_{L^\infty((0,T)\times\T^3)}\\
&\qquad + C\|m_{f^\e} -m_f\|_{L^{\frac 65}((0,T)\times \T^3)}\|\theta_\e\star\psi\|_{L^\infty((0,T)\times\T^3)}\\
&\qquad + C\lt(\|\rho_f\|_{L^\infty(0,T;L^{\frac 53-\delta}(\T^3))}\lt(\|w\|_{L^{p'}(0,T;L^{\frac 52+\gamma}(\T^3))} +\|u_c\|_{L^\infty}\rt) + \|m_f\|_{L^\infty(0,T;L^1(\T^3))}\rt) \cr
&\qquad \quad \times \|\nabla_x \psi\|_{L^\infty((0,T)\times\T^3)}\intt |y|\theta_\e(y)\,dy\\
&\quad \longrightarrow 0 \quad \mbox{as} \quad \e \to 0,
\end{align*}
where we used the same $\gamma$ and $\delta$ as \eqref{e_del_choice} as before and also the estimate \eqref{quo_est} with
\[
q= \frac 53 - \delta', \quad \frac{1-\beta}{1-\frac \beta p -\frac 1q} = \frac 52 + \gamma',
\]
where $\gamma'$, $\delta'$ and $\beta\in(0,1)$ are chosen sufficiently small to ensure $w \in L^{p'}(0,T;L^{\frac 52+\gamma'}(\T^3))$.

\medskip

\noindent (iii)  For the stress term, we apply arguments similar to those used in the passage $n \to \infty$. By \eqref{en_eq_e}, the sequence
 $\tau(Dw^{\e})$ is uniformly bounded in $L^{p'}((0,T) \times \T^3)$, so that (up to subsequences) 
 \[
 \tau(Dw^{\e}) {\rightharpoonup}  \chi  \quad \mbox{weakly in }  L^{p'}((0,T) \times \T^3). 
 \]
 Thanks to the strict monotonicity of the nonlinear stress tensor, we again use the bounded truncation method as before to deduce
    \begin{equation}\label{Dwe_ae}
 	Dw^{\e} \to Dw \quad \mbox{a.e. in }(0,T) \times \T^3.
    \end{equation}
By the continuity of $\tau (\cdot)$, it follows that
\[
\tau(Dw^\e) \rightharpoonup \tau(Dw) \quad \text{weakly in }L^{p'}((0,T) \times \T^3).
\]
We note that the condition $p > \frac{8}{5}$ is essential at this stage to guarantee the applicability of the bounded truncation method leading to \eqref{Dwe_ae}.  
 
Hence, $(f, w)$ solves \eqref{main_eq} in the sense of Definition \ref{def_weak}. Setting $u := w + u_c$ completes the construction of the weak solution.

\medskip

\paragraph{\it Step 3: Energy inequality}

Finally, we pass to the limit in the energy inequality. The uniform bounds from the approximate system and the lower semicontinuity of convex functionals imply that the limit $(f, u)$ satisfies
\[
\begin{aligned}
&\sup_{t \in (0,T)}\left( \inttr \frac{|v|^2}{2} f\,dvdx + \intt \frac{|u|^2}{2}\,dx \right) +  \mu\int_0^T \intt   |D u|^p\,dxdt  + \int_0^T \inttr |u-v|^2 f\,dvdxdt \\
&\quad \le \inttr \frac{|v|^2}{2} f_0\,dvdx + \intt \frac{|u_0|^2}{2}\,dx.
\end{aligned}
\]

This concludes the proof of Theorem \ref{thm_main}, establishing the global existence of weak solutions to the coupled Vlasov--non-Newtonian fluid system.

%
%
%
%
%
\section{A priori estimate for the large-time behaviors}\label{sec_lt}
In this section, we establish decay estimates for global-in-time weak solutions to the system \eqref{main_eq} by deriving a differential inequality for a suitably defined modulated energy functional.

We assume, without loss of generality, that the total mass of particles is normalized to unity:
\[
\inttr f\,dxdv = \inttr f_0\,dxdv = 1, \quad \forall\, t \geq 0.
\]

%
%
%
%
\subsection{Modulated energy estimate}

To capture the large-time behavior of both the kinetic and fluid components, we consider a modulated energy functional $\calE(t)$ and a corresponding dissipation $\calD(t)$ given by
\[
\calE := \frac12\inttr |v - v_c|^2 f\,dxdv + \frac12\intt |u - u_c|^2 + \frac14|u_c - v_c|^2
\]
and  
\[
\calD:= \mu\intt | D u|^p \,dx + \inttr |u - v|^2 f\,dxdv,
\]
where the averaged quantities $v_c(t)$ and $u_c(t)$ are given as
\[
v_c := \inttr vf\,dxdv \quad \mbox{and} \quad u_c:= \intt u\,dx.
\]
The modulated energy $\calE(t)$ measures the deviation of the kinetic and fluid velocities from their respective means and mutual alignment, while $\calD(t)$ encodes both internal dissipation in the fluid and drag-induced relaxation toward the kinetic mean.

We first establish a key differential identity:

\begin{lemma} Let $(f,u)$ be a solution to the system \eqref{main_eq} with sufficient regularity on the time interval $[0,T]$. Then we have
\[
\frac{d}{dt} \calE(t) + \calD(t) = 0.
\]
\end{lemma}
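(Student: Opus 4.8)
The plan is to compute $\frac{d}{dt}\calE(t)$ term by term using the PDE system \eqref{main_eq}, and to check that all the intermediate terms — the fluid convection, the pressure, and the cross terms coming from differentiating the ``modulation'' centers $v_c$ and $u_c$ — cancel, leaving only the dissipation $\calD(t)$. First I would record the evolution of the averages. Integrating the kinetic equation against $v$ and using $\mathrm{div}_x u = 0$ together with periodicity gives $\dot v_c = \inttr (u-v) f\,dxdv$; integrating the momentum equation over $\T^3$ kills the convective term $(u\cdot\nabla_x)u$, the pressure gradient and $\mathrm{div}_x\tau(Du)$ (all are divergences or gradients on the torus), so $\dot u_c = -\intt \intr (u-v) f\,dv\,dx = -\inttr(u-v)f\,dxdv = -\dot v_c$. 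In particular $u_c + v_c$ is conserved, which is the momentum conservation alluded to in the introduction, and $\frac{d}{dt}(u_c - v_c) = -2\dot v_c = -2\inttr(u-v)f\,dxdv$.

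Next I would differentiate the three pieces of $\calE$. For the kinetic piece, write $\frac12|v-v_c|^2 = \frac12|v|^2 - v\cdot v_c + \frac12|v_c|^2$ and differentiate using the weak form of the Vlasov equation tested against $\frac12|v|^2$ and against $v$ (for the $v\cdot v_c$ term), being careful that $v_c = v_c(t)$ is spatially constant so $\nabla_x(\cdot)$ and $\nabla_v(\cdot)$ hit only the explicit $v$-dependence; the transport term $v\cdot\nabla_x$ integrates to zero and $\nabla_v\cdot((u-v)f)$ contributes $\inttr (u-v)\cdot(v - v_c) f\,dxdv$ after accounting for the $\dot v_c$ term via the conservation-of-mass identity $\frac{d}{dt}\inttr f\,dxdv = 0$. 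For the fluid piece, test the momentum equation against $u - u_c$: the time derivative gives $\intt \pa_t u\cdot(u-u_c)\,dx = \frac12\frac{d}{dt}\intt|u-u_c|^2\,dx + \dot u_c\cdot\intt(u-u_c)\,dx$ and the last integral vanishes since $\intt(u - u_c)\,dx = 0$; the convective term $\intt (u\cdot\nabla_x)u\cdot(u-u_c)\,dx = \intt(u\cdot\nabla_x)u\cdot u\,dx = 0$ (again $\mathrm{div}_x u=0$, torus), the pressure term vanishes likewise, $-\intt \mathrm{div}_x\tau(Du)\cdot(u-u_c)\,dx = \intt \tau(Du):D u\,dx = \mu\intt|Du|^p\,dx$ (the $u_c$ part drops because $Du_c=0$), and the drag term yields $-\inttr(u-v)\cdot(u-u_c)f\,dxdv$. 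Finally the mixed term $\frac14\frac{d}{dt}|u_c-v_c|^2 = \frac12(u_c-v_c)\cdot\frac{d}{dt}(u_c-v_c) = -(u_c-v_c)\cdot\inttr(u-v)f\,dxdv$.

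Then I would add the three contributions and simplify the coupling terms. The three drag-type contributions are, schematically, $+\inttr(u-v)\cdot(v-v_c)f$ from the kinetic part, $-\inttr(u-v)\cdot(u-u_c)f$ from the fluid part, and $-(u_c-v_c)\cdot\inttr(u-v)f$ from the mixed part. Writing $(v-v_c) - (u - u_c) = -(u-v) + (u_c - v_c)$ and using $\inttr(u-v)f\,dxdv\cdot(u_c-v_c) = (u_c-v_c)\cdot\inttr(u-v)f\,dxdv$, the $(u_c-v_c)$ terms cancel against the mixed contribution and one is left with exactly $-\inttr |u-v|^2 f\,dxdv$. Combining with the viscous term $-\mu\intt|Du|^p\,dx$ gives $\frac{d}{dt}\calE(t) = -\mu\intt|Du|^p\,dx - \inttr|u-v|^2 f\,dxdv = -\calD(t)$, which is the claim. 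The main obstacle — really a bookkeeping one rather than a conceptual one — is tracking the several $\dot v_c$ and $\dot u_c$ terms that arise from differentiating the squared norms $|v-v_c|^2$, $|u-u_c|^2$, $|u_c-v_c|^2$, and making sure the ones that are supposed to cancel (those multiplying $\intt(u-u_c)\,dx = 0$ or combining with the conserved mass) indeed do, so that only the clean dissipative structure survives; since $(f,u)$ is assumed ``sufficiently regular,'' no functional-analytic subtleties enter and every integration by parts on $\T^3$ is justified.
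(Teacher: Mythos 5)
Your proof is correct and follows essentially the same route as the paper's: differentiate each of the three pieces of $\calE$ separately, recognize the three drag-type cross-terms that arise, and observe that they combine via $(v-v_c)-(u-u_c)=-(u-v)+(u_c-v_c)$ and total-momentum conservation $\dot u_c = -\dot v_c$ to leave exactly $-\calD(t)$. The paper writes the cancellation slightly more compactly (first summing the kinetic and fluid pieces to isolate $\inttr(u_c-v_c)\cdot(u-v)f\,dxdv$ on the right, then canceling it against $\frac14\frac{d}{dt}|u_c-v_c|^2$), but the underlying bookkeeping is the same as yours.
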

\begin{proof}Since the velocity means satisfy
\[
\inttr (v - v_c)f\,dxdv = 0  = \intt (u - u_c)\,dx.
\]
we obtain
\[
\frac12 \frac{d}{dt} \inttr |v - v_c|^2 f\,dxdv = \inttr (v - v_c) \cdot (u-v)f\,dxdv
\] 
and
\[
\frac12 \frac{d}{dt} \intt | u - u_c|^2 \,dx = -  \mu \intt |D u|^p\,dx - \inttr (u - u_c) \cdot (u-v)f\,dxdv.
\]
Summing the above estimates yields
\begin{align*}
&\frac12\frac{d}{dt}\lt( \inttr |v - v_c|^2 f\,dxdv +  \intt | u - u_c|^2 \,dx  \rt) + \mu \intt |D u|^p\,dx +  \inttr |u - v|^2 f\,dxdv \cr
&\quad  = \inttr (u_c - v_c) \cdot (u-v)f\,dxdv.
\end{align*}
On the other hand, by using the conservation of total momentum, we get
\[
\frac14 \frac{d}{dt} | v_c - u_c|^2 = - (v_c - u_c) \cdot \frac{d u_c}{dt} =  -\inttr (u_c - v_c) \cdot (u-v)f\,dxdv.
\]
Adding the two relations cancels the right-hand side and gives the desired identity.
\end{proof}

%
%
%
%
\subsection{Proof of Theorem \ref{thm_main2}} 
This balanced identity plays a central role in our large-time analysis. To extract decay estimates, we must estimate $\calE(t)$ from below in terms of $\calD(t)$. To this end, we expand the drag term:
\begin{align*}
&\inttr | u - v|^2 f\,dxdv \cr
&\quad = \inttr | u - u_c + u_c - v_c + v_c - v|^2 f\,dxdv\cr
&\quad = \inttr \lt( |u - u_c|^2 + | u_c - v_c|^2 + |v_c - v|^2 + 2(u - u_c) \cdot (u_c - v_c) + 2(u - u_c) \cdot (v_c - v) \rt)f\,dxdv.
\end{align*}
Thus, we obtain
\begin{align*}
|u - v_c|^2 + \inttr |v_c - v|^2 f\,dxdv &= \inttr |u-v|^2 f\,dxdv - \inttr |u - u_c|^2 f\,dxdv\cr
&\quad - 2(u_c - v_c) \cdot \inttr (u - u_c)f\,dxdv\cr
&\quad - \inttr (u - u_c)\cdot (v_c - v)f\,dxdv.
\end{align*}
Here, we estimate the last two terms on the right-hand side of the above as
\[
\lt|2(u_c - v_c) \cdot \inttr (u - u_c)f\,dxdv \rt| \leq \frac34 |u_c - v_c|^2 + \frac43 \inttr |u - u_c|^2 f\,dxdv
\]
and
\[
\lt|\inttr (u - u_c)\cdot (v_c - v)f\,dxdv \rt| \leq \frac12 \inttr |v_c - v|^2 f\,dxdv + 2\inttr |u-u_c|^2 f\,dxdv.
\]
This implies
\bq\label{ineq_E}
\calE(t) \leq \inttr |u-v|^2 f\,dxdv + 3\inttr |u-u_c|^2 f\,dxdv + \frac12 \intt |u - u_c|^2\,dx.
\eq
\subsubsection{The case $p > 2$} 
We first consider the case $p > 2$ and claim that
\[
\calE(t)^{\frac p2} \ls \calD(t), \quad \forall\, t \geq 0.
\]

For $p \in (2,3)$, the H\"older inequality and the Gagliardo--Nirenberg--Sobolev inequality yield
\[
\intt |u - u_c|^2 \rho_f\,dx \leq \|\rho_f\|_{L^{\frac{3p}{5p-6}}} \|u - u_c\|_{L^{\frac{1}{\frac1p - \frac13 }}}^2 \leq C\|\rho_f\|_{L^r}\|Du\|_{L^p}^2,
\]
where $\rho_f := \intr f\,dv$ denotes the local particle density.

When $p=3$, choosing sufficiently small $0 < \eta \leq 1 - \frac1r$ for given $r \in (1,\infty]$ and using the Poincar\'e inequality gives
\[
\intt |u - u_c|^2 \rho_f\,dx \leq \|\rho_f\|_{L^{\frac{1}{1-\eta}}} \|u - u_c\|_{L^{\frac1\eta}}^2 \leq C\|\rho_f\|_{L^r}\|Du\|_{L^p}^2.
\]

For $p>3$, we use $W^{1,p}(\T^3) \subseteq L^\infty(\T^3)$ to deduce
\[
\intt |u - u_c|^2 \rho_f\,dx \leq \|\rho_f\|_{L^1} \|u - u_c\|_{L^\infty}^2 \leq C\|\rho_f\|_{L^r}\|Du\|_{L^p}^2.
\]

Thus, we obtain
\[
\calE \leq \inttr |u-v|^2 f\,dxdv + C\|Du\|_{L^p}^2,
\]
which implies
\[
\calE \leq C(\calD + \calD^{\frac 2p}).
\]
Hence, when $\calD(t) \le 1$, 
\[
\frac{d}{dt}\calE(t) + C_0 \calE^{\frac p2}(t) \leq 0,
\]
and when $\calD(t) >1$, 
\[
\frac{d}{dt}\calE(t) + C_0 \calE(t) \leq 0.
\]
Solving this differential inequality concludes the desired decay estimate for $\calE$. 

\subsubsection{The case $2 \geq p > \frac65$} When $p \le 2$, the previous argument does not apply directly due to the loss of monotonicity in the exponent. Instead, we claim
\[
\calE(t) \ls \calD(t), \quad \forall\, t \geq 0.
\]

For $p = 2$, this follows immediately from the prior estimates.  For $p \in (\frac{6}{5}, 2)$, we first choose any $r \in (\frac6{5p-6}, \infty]$. Here, we also choose $\delta \in (0,1)$ small enough so that $(5-\delta)p - 6 > 0$ and 
\[
\ell := \frac{3(2-\delta)}{(5-\delta)p - 6} < r \in  \left(\frac{6}{5p-6}, \infty\right].
\]
Note that 
\[
\ell <\frac{6}{5p-6} 
\]
due to $p \in (\frac{6}{5}, 2)$. For this $\ell$, we apply the Gagliardo--Nirenberg interpolation inequality:
\[
\|u-u_c\|_{L^{\frac{2\ell}{\ell-1}}} \le C\|Du\|_{L^p}^\alpha \|u-u_c\|_{L^q}^{1-\alpha}, \quad \alpha := \frac{\frac1q + \frac{1}{2\ell} - \frac12}{\frac13 + \frac1q - \frac1p},
\]
where  
\[
q := \frac{6-3p}{p -\frac 3\ell} = 2 - \delta \in (1,2) \quad \mbox{and} \quad 2\alpha = p. 
\]
Thus, we obtain
\[
\intt \rho_f |u-u_c|^2 \,dx \le \|\rho_f\|_{L^\ell} \|u-u_c\|_{L^{\frac{2\ell}{\ell-1}}}^2 \le C\|\rho_f\|_{L^r}\|Du\|_{L^p}^p \|u-u_c\|_{L^q}^{2-p} \leq C \mathcal{E}(t)^{1 - \frac p2}\|Du\|_{L^p}^p \leq C\|Du\|_{L^p}^p
\]
for some $C>0$ independent of $t$. 

In the borderline case $r = \frac6{5p-6}$, we get $q = 2$ in the above estimate, and thus the same conclusion holds without modification.

Finally, when $p =\frac65$, we have $r = \infty$, and a similar argument yields
\[
\intt \rho_f |u-u_c|^2 \,dx \leq \|\rho_f\|_{L^\infty}\|u-u_c\|_{L^2}^2 \leq C\|Du\|_{L^{\frac65}}^{\frac65}\|u-u_c\|_{L^2}^{\frac54} \leq C\mathcal{E}(t)^{\frac58}\|Du\|_{L^{\frac65}}^{\frac65} \leq C\|Du\|_{L^{\frac65}}^{\frac65},
\]
with $C>0$ independent of $t$.

This together with \eqref{ineq_E} yields
\[
\calE(t) \leq \inttr |u-v|^2 f\,dxdv + C\|Du\|_{L^p}^p \leq C\calD(t).
\]
From which, the exponential decay rate of convergence of $\calE(t)$ is clear.  This completes the proof.

%
%
%
%

\section*{Acknowledgments}
The work of YPC was supported by NRF grant no. 2022R1A2C1002820 and RS-2024-00406821. The work of JJ was supported by NRF grant no. 2022R1A2C1002820. The work of AWK was supported by National Science Centre Poland grant no. UMO-2020/38/E/ST1/00469.

%
%
%
%
\appendix

\section{Proof for the pointwise convergence of the gradient of fluid velocities}\label{app.A}
\setcounter{equation}{0}
In this appendix, we prove the pointwise convergence of $Dw^{n,\e}$ to $Dw^\e$, following the argument of \cite{FMS97}. A similar strategy applies to the passage $\e \to 0$ yielding $Dw^\e$ to $Dw$. For simplicity of notation, we omit the dependence on $\e$ in this section. Our goal is to show the existence of a subsequence of $\{w^n\}_n$ such that for each $\kappa\in(0,1)$ and $\te\in (0,1)$, there exists $N \in \N$ satisfying
\[
\int_0^T \lt[\intt (\tau (Dw^\ell) - \tau(Dw^k))\cdot (D(w^\ell - w^k))\,dx \rt]^\kappa dt  < \te,
\]
whenever $w^\ell$ and $w^k$ are in the aforementioned subsequence  with $\ell$, $k \ge N$. Once this estimate holds, one combines this with the strict monotonicity \eqref{st.mt} to yield the desired pointwise convergence.

 First, we recall from the energy estimates that there exists a constant $K>0$ independent of $n$ and $\e$ such that
 
\[
\begin{aligned}
&\|w^{n,\e}\|_{L^\infty(0,T;L^2(\T^3))} + \|\nabla_x w^{n,\e}\|_{L^p((0,T)\times\T^3)} + \|w^{n,\e}\|_{L^{\frac{5p}{3}}((0,T)\times\T^3)} \le K,\\
&\|(\theta_\e \star w^{n,\e})\cdot\nabla_x w^{n,\e}\|_{L^s((0,T)\times\T^3)} + \|\pa_t w^{n,\e}\|_{\mathcal{Y}^*} \le K.
\end{aligned}
\]
Now, for $\ell, k \in \N$ we define $g^{\ell,k}$ as 
\[
g^{\ell,k} := |\nabla_x w^\ell|^p + |\nabla_x w^k|^p + (1+ |Dw^\ell|^{p-1} + |Dw^k|^{p-1})(|Dw^\ell| + |Dw^k|).
\]
Then this implies $g^{\ell,k} \in L^1((0,T)\times\T^3)$ and the arguments in \cite[Section 3]{FMS97} that there exists a subsequence  of $\{w^n\}_n$ such that for each $\te\in(0,1)$ and $w^\ell$ and $w^k$ in the subsequence, there exists $L \le \frac{\te}{K}$ which is independent of $\ell$ and $k$ such that
\[
\|g^{\ell,k}\|_{L^1(E^{\ell, k})} \le \te,
\]
where $E^{\ell,k} := \{ (t,x) \in [0,T)\times\T^3 : L^2 \le |w^\ell(t,x) - w^k(t,x)| <L \}$. For each $w^\ell$ and $w^k$, we construct a new sequence $\Psi^{\ell,k}$ as
\[
\Psi^{\ell, k} := (w^\ell - w^k) \lt(1- \min\lt\{1, \frac1L |w^\ell - w^k| \rt\} \rt).
\]
One can check $\Psi^{\ell,k}$ satisfies the following properties:

\begin{enumerate}
\item
$\Psi^{\ell,k} \equiv 0$ on $Q_L^{\ell,k} := \{ (t,x) \in [0,T)\times\T^3  :  |w^\ell - w^k| \ge L\}$ and hence $|\Psi^{\ell,k}|\le L$ otherwise.
\item
$\Psi^{\ell,k} \in L^p(0,T; V^{1,p})$ uniformly in $\ell$ and $k$, $\Psi^{\ell, k} \in L^\infty((0,T)\times\T^3)$,   and $\|\Psi^{\ell,k}\|_{L^r((0,T)\times\T^3)} \to 0$ as $\ell$, $k\to \infty$ for each $r \in [1,\infty)$.
\item
$\|\nabla_x \cdot \Psi^{\ell, k}\|_{L^p((0,T)\times\T^3)} \le 2\te.$
\end{enumerate}
Using this sequence, we consider the solution to the following elliptic problem for almost all $t \in [0,T)$:
\[
-\Delta h^{\ell, k} = -\nabla_x \cdot \Psi^{\ell,k}, \quad \intt h^{\ell, k}\,dx =0.
\]
Then one easily obtains the estimates
\[
\|h^{\ell, k}(t)\|_{W^{2,p}(\T^3)} \le C\|\nabla_x \cdot \Psi^{\ell, k}\|_{L^p(\T^3)}, \quad \|h^{\ell,k}(t)\|_{W^{1,s'}(\T^3)} \le C\|\Psi^{\ell,k}\|_{L^{s'}(\T^3)}
\]
with $C$ independent of $t$, $\ell$, and $k$. Finally, we define
\[
\Phi^{\ell,k} := \Psi^{\ell,k} - \nabla h^{\ell,k}.
\]
This $\Phi^{\ell,k}$ satisfies
\[
\nabla_x \cdot \Phi^{\ell,k} = 0, \quad \Phi^{\ell,k} \in L^p(0,T;V^{1,p}) \cap L^{s'}((0,T)\times\T^3),
\]
and for sufficiently large $\ell$ and $k$,  
\bq\label{Psi_ep}
\|\Phi^{\ell,k}\|_{L^{s'}((0,T)\times\T^3)} \le \te.
\eq
Now, we use $\Phi^{\ell,k}$ as a test function for the weak formulation of $w^\ell$ and $w^k$ so that 
\begin{align*}
&\int_0^T\intt (\pa_t w^\ell - \pa_t w^k)\cdot \Psi^{\ell,k}\,dxdt + \int_0^T \intt (\tau(Dw^\ell) - \tau(Dw^k)): D(w^\ell - w^k) \cdot \chi\,dxdt\\
 &\quad= \int_0^T \intt (\pa_t w^\ell - \pa_t w^k)\cdot \nabla_x  h^{\ell,k}\,dxdt\\
 &\qquad + \int_0^T \intt (\tau(Dw^\ell) - \tau(Dw^k)): D(\nabla_x h^{\ell,k})\,dxdt\\
 &\qquad + \int_0^T \intt (\tau(Dw^\ell) - \tau(Dw^k)): D(w^\ell - w^k) \frac{|w^\ell - w^k|}{L}\chi\,dxdt\\
 &\qquad +  \int_0^T \intt ((u_c^\ell \cdot \nabla_x) w^\ell  - (u_c^k \cdot \nabla_x ) w^k)  \cdot \Phi^{\ell,k}\,dxdt\\
 &\qquad + \int_0^T \intt ( ((\theta_\e \star w^\ell)\cdot \nabla_x) w^\ell - ((\theta_\e \star w^k)\cdot \nabla_x) w^k)\cdot \Phi^{\ell,k}\,dxdt\\
 &\qquad + \int_0^T \intt [\rho_{f^\ell} (\theta_\e \star w^\ell + u_c^\ell) - m_{f^\ell} - \rho_{f^k} ( \theta_\e \star w^k + u_c^k) + m_{f^k})]\cdot \Phi^{\ell,k}\,dxdt\\
 &\quad=: \sum_{i=1}^6 \mathcal{I}_i,
\end{align*}
where $\chi$ is the characteristic function on $([0,T)\times\T^3) \setminus Q_L^{\ell,k}$.
 
Terms $\mathcal{I}_i$ with $i=1,2,3,5$ can be handled in a similar way to \cite{FMS97}. 
To deal with the above, let us notice that the first term is equal to $H(w^\ell -  w^k)(T) - H(w^\ell -  w^k)(0)$, where $H$ is a nonnegative primitive function to $\Psi^{\ell,k}$. Observe that $H(w^\ell - \pa_t w^k)(0)=0$ and therefore the first term is nonnegative.

By integration by parts $\mathcal{I}_1= 0.$  For $\mathcal{I}_3$, uniform $L^1$ bound on $g^{\ell,k}$ and the definition of $L$ yield 
\[\begin{aligned}
|\mathcal{I}_3| &\leq \lt(\iint_{E^{\ell,k}}+ \iint_{\{|w^\ell - w^k|\le L^2\}}\rt) (\tau(Dw^\ell) - \tau(Dw^k)): D(w^\ell - w^k) \frac{|w^\ell - w^k|}{L}\,dxdt\\
&\le C\|g^{\ell,k}\|_{L^1(E^{\ell,k})} + CL\|g^{\ell,k}\|_{L^1((0,T)\times\T^3)}\le C\tilde \e.
\end{aligned}\]

For $\mathcal{I}_2$, we estimate
\[
\mathcal{I}_2 \le C\|g^{\ell,k}\|_{L^1((0,T)\times\T^3)}^{\frac{1}{p'}}\|h^{\ell,k}\|_{L^p(0,T;W^{2,p}(\T^3))}\le C\|\nabla\cdot\Psi^{\ell,k}\|_{L^p((0,T)\times\T^3)}\le C\tilde \e.
\]

For $\mathcal{I}_4$ and $\mathcal{I}_5$, we may follow similarly. Since $((\theta_\e \star w^\ell)\cdot \nabla_x) w^\ell$ is bounded in $L^s((0,T)\times\T^3)$ and by \eqref{Psi_ep}, we find that the smallness of those terms:
\[
\mathcal{I}_4 \le \|(u_c^\ell \cdot \nabla_x) w^\ell - ( u_c^k \cdot \nabla_x ) w^k\|_{L^{s}((0,T)\times\T^3)}\|\Phi^{\ell,k}\|_{L^{s'}((0,T)\times\T^3)} \le c \te,
\]
\[
\mathcal{I}_5 \le \|((\theta_\e \star w^\ell) \cdot \nabla_x) w^\ell - ( (\theta_\e \star w^k) \cdot \nabla_x ) w^k\|_{L^{s}((0,T)\times\T^3)}\|\Phi^{\ell,k}\|_{L^{s'}((0,T)\times\T^3)}
\le c \te.
\]

For $\mathcal{I}_6$, if $\frac85 < p <2$, we use the drag term $\rho_{f^\ell} (\theta_\e \star w^\ell + u_c^\ell) - m_{f^\ell}$ belongs to $L^s((0,T)\times\T^3)$ uniformly in $\ell$ and $\e$, and the smallness of $\Phi^{\ell,k}$ in $L^{s'}((0,T)\times\T^3)$ to get $\mathcal{I}_6 \le C\te$ for sufficiently large $\ell$ and $k$. When $p>2$, we can get
\[\begin{aligned}
\mathcal{I}_6 &\le  \|[\rho_{f^\ell} (\theta_\e \star w^\ell + u_c^\ell) - m_{f^\ell}] - [\rho_{f^k} ( \theta_\e \star w^k + u_c^k) - m_{f^k}]\|_{L^2(0,T; L^{\frac 65}(\T^3))} \|\Phi^{\ell,k}\|_{L^2(0,T; L^6(\T^3))}\\
&\le C\Big(\|\rho_{f^\ell} - \rho_{f^k}\|_{L^q(0,T;L^{\frac 32}(\T^3))}(\|w^\ell\|_{L^{\frac{2q}{q-2}}(0,T;L^6(\T^3))} + \|u_c^\ell\|_{L^\infty((0,T))})\\
&\qquad  + \|\rho_{f^k}\|_{L^\infty(0,T;L^{\frac32}(\T^3))}(\|w^\ell-w^k\|_{L^2(0,T;L^6(\T^3))} + \|u_c^\ell - u_c^k\|_{L^\infty((0,T))})\\
&\qquad + \|m_{f^\ell} - m_{f^k}\|_{L^2(0,T;L^{\frac65}(\T^3))}\Big)\|\Phi^{\ell,k}\|_{L^p(0,T;V^{1,p})},
\end{aligned}\]
where $q>\frac{1}{\frac12 - \frac{2}{5p-6}}$ is sufficiently large so that
\[
\|w^\ell\|_{L^{\frac{2q}{q-2}}(0,T;L^6(\T^3))} \le  C\|\nabla_x w^\ell\|_{L^p((0,T)\times\T^3)}^{\frac{2}{5p-6}} \|w^\ell\|_{L^M(0,T;L^2(\T^3))}^{\frac{3p-6}{5p-6}}
\]
with $M:= \frac{6q(p-2)}{2(q-1)(5p-6)-4q}$. Then we can use the strong convergences of $\rho_{f^\ell}$, $m_{f^\ell}$ and $w^\ell$ to have again $\mathcal{I}_6 \le \te$ for sufficiently large $\ell$ and $k$. This completes the proof for the desired pointwise convergence.

%
%
%
%

\end{document}